\newtheorem{theorem}{Theorem}
\newtheorem{lemma}{Lemma}
\newtheorem{proposition}{Proposition}
\newtheorem*{proof}{Proof}{\upshape}
\newtheorem{remark}{Remark}
\begin{document}
\title[Thick hyperbolic repelling invariant Cantor set and wild attractor]{Thick hyperbolic repelling invariant Cantor set and wild attractor}

\author{ Yiming Ding, Jianrong Xiao}

\address{Center for Mathematical Sciences and Department of Mathematics, Wuhan University of Technology, Wuhan 430070, China}

\ead{dingym@whut.edu.cn and xiaojr@whut.edu.cn}
\vspace{10pt}
\begin{indented}
\item[]
\end{indented}

\begin{abstract}
Let $D$ be the set of $\beta \in (1, 2]$ such that $f_\beta$ is a symmetric tent map with finite critical orbit. For $\beta \in D$, by operating Denjoy like surgery on $f_{\beta}$, we constructed a $C^1$ unimodal map $\tilde{g}_\beta$ admitting a thick hyperbolic repelling invariant Cantor set which contains a wild Cantor attractor. The smoothness of $\tilde{g}_\beta$ is ensured by the effective estimation of the preimages of the critical point as well as the prescribed lengths of the inserted intervals. Furthermore, $D$ is dense in $(1, 2]$, and $\tilde{g}_\beta$  can not be $C^{1+\alpha}$ because the hyperbolic repelling invariant Cantor set of $C^{1+\alpha}$ map has Lebesgue measure equal to zero.
\\{Keywords: thick hyperbolic repelling Cantor set, wild attractor, Denjoy like surgery}
\\ Mathematics Subject Classification numbers: 37B25, 37B35
\end{abstract}

%
%
%
%
%
\section{Introduction}
\renewcommand{\theproposition}{\Alph{proposition}}
\renewcommand{\thetheorem}{\Alph{theorem}}
\renewcommand{\thelemma}{\Alph{lemma}}
\noindent

Let $f:I\rightarrow I$ be a $C^r, r\geq 1$ map, where $I$ is a compact interval of the real line. A subset $K\subset I$ is a hyperbolic repelling(or for short, hyperbolic) set of $f$ if $K$ is forward invariant and there exist constants $C>0$ and $\lambda>1$ such that $|Df^n(x)|>C\lambda^n$ for all $x\in K$ and all $n\in N$. We call $K$ a thick set if it admits positive Lebesgue measure. In  \cite{xia2006hyperbolic}, Xia proved that if a $C^{1+\alpha}, \alpha>0$, volume preserving diffeomorphism on a compact connected manifold has a hyperbolic invariant set with positive volume, then the map is Anosov. This is not necessarily true for $C^1$ map. In  \cite{morales2007examples}, Morales constructed examples of singular hyperbolic attracting sets. The construction involves Dehn surgery, templates and geometric models. Whether there exists $C^1$ unimodal map admitting thick hyperbolic repelling invariant Cantor set on interval of the real line is an interesting problem.

Attractor is one of the main concepts in the theory of dynamical systems. According to Milnor  \cite{milnor1985concept}, a forward invariant closed set $A$ of $f$ is called a metric attractor provided the basin of attraction $\mathbb{B}(A,f):=\{x: \omega_f(x)\subset A\}$ ($\mathbb{B}(A)$ for short) of $A$ has positive Lebesgue measure and there exists no forward invariant closed set $A_1$ strictly contained in $A$ has this property. In particular, if a forward invariant closed set has positive Lebesgue measure, then it may contain metric attractor. A forward invariant closed set $A$ is called a topological attractor provided the basin of attraction $\mathbb{B}(A)$ of $A$ is a residual set (i.e., the intersection of countable open dense sets), while the basin $\mathbb{B}(A_0)$ is meager (i.e., the union of countable closed nowhere dense sets) for every forward invariant closed set $A_0$ strictly contained in $A$. The set $A$ is called wild attractor if $A$ is a metric attractor but not a topological attractor.

A topological attractor must be a metric attractor, but the converse is not necessarily true. In  \cite{milnor1985concept}, Milnor asked whether there exists a wild attractor. Blokh and Lyubich \cite{blokh1991measurable} studied metric attractors under the assumption that $f$ has negative Schwarzian derivative. Their work was extended to maps in the class $\mathbb{A}$ after \cite{kozlovski2000getting}, where $\mathbb{A}$ denotes the collection of unimodal maps that $f$ are $C^3$ outsize the critical points $c$. In particular, it was shown that if the metric attractor is a Cantor set $X$, then $X$ must coincide with $\omega(c)$ and the topological entropy $h_{top}(f|_X)=0$. In \cite{bruin1996wild}, Van Strien showed $X=\omega(c)$ is a wild Cantor attractor with zero Lebesgue measure for the unimodal interval $C^2$ map with a critical point $c$ and with the Fibonacci combinatorics as the critical order $l$ sufficiently large. This result was generalized to unimodal maps with "Fibanacci-like" combinatorics by Bruin  \cite{bruin1998topological}. In  \cite{bruin1997adding}, Bruin, Keller and Pierre constructed a unimodal map $f$ together with a symbolic dynamical system $(\Omega, T)$ such that $f$ has a wild attractor $\omega(c)$ and $f|\omega(c)$ is a factor of $(\Omega, T)$. In \cite{bruin1998topological}, Bruin showed that smooth unimodal maps for which $k-Q(k)$ is unbounded cannot have any wild attractors for any large but finite value of the critical order, then he exhibited an example of a wild attractor in \cite{bruin2015wild}. In \cite{li2014new}, Li and Wang introduced the class $W_m$ which is different from Fibonacci type as $m\geq 2$, and left a question that whether maps in $W_m, m\geq 2$ have a wild attractor. In \cite{ding2022alpha}, Ding and Sun investigated the topological behaviors of continuous maps with one topological attractor.

In \cite{murdock2005map}, a map $f$ with invariant Cantor set $\Lambda$ admitting positive Lebesgue measure was constructed, where $f$ is differentiable everywhere except on a countable set $B$ and $|f'(x)|=2$ for all $x$ in $\Lambda\backslash B$. We asked that whether there exists a $C^1$ map $f$ with invariant Cantor set admitting positive Lebesgue measure, which contains a wild attractor.

We are interested in a family of symmetric tent maps $f_{\beta}(x):I\rightarrow I,\ I=[0,1]$ with critical (turning) point $c=\frac{1}{2}$ and $1<\beta\leq 2$ of the form
$$
f_{\beta}(x)=\left \{
\begin{array}{ll}
\beta x,& 0\leq x \leq \frac{1}{2}\\
\beta(1-x),& \frac{1}{2}< x \leq 1.
\end{array}
\right.
$$

In the case $1<\beta\leq 2$, $f_\beta$ has a core $P_\beta:=[\beta(1-\frac{\beta}{2}),\frac{\beta}{2}]$. Notice that $f_\beta(P_\beta)=P_{\beta}$. Recall that a unimodal map $f$ is renormalizable provided there exists a proper restrictive interval $J$ and $n>1$ such that $f^n(J)\subset J$ and $f^n\mid J$ is again a unimodal map (we relax the definition of unimodal to allow for a map to be decreasing to the left of the critical point and increasing to the right), where $J$ is called renormalization interval. It is obviously seen that any renormalization interval admits the form $[a,\hat{a}]$ such that $f(a)=f(\hat{a})$. We always denote the closed interval by $[a,\hat{a}]$ with endpoints $a$ and $\hat{a}$, regardless of $a>\hat{a}$ or $a<\hat{a}$. According to  \cite{brucks2004topics}, $f_\beta$ can be renormalized finite times via period-doubling renormalization as $\beta\in (1,\sqrt{2}]$. More precisely, a tent map $f_\beta$ with $\beta\in(\sqrt{2},2]$ is not renormalizable. If $\sqrt{2}< \beta^m \leq 2$ for some $m:=2^k, k\geq 1$, then $f_\beta$ is $k$ times renormalizable, and there exists an restrictive interval $R_\beta\subset P_\beta$ such that $f_\beta^{2^k}(R_\beta)\subset R_\beta$. Gao and Gao\cite{gao2021fair} investigated the tent map and characterized the regularity of fair entropy precisely.

In this paper, we will construct a family of $C^1$ unimodal map admitting thick hyperbolic repelling invariant Cantor set which contains a wild attractor by operating Denjoy like surgery on symmetric tent map. More precisely, let $D$ be the set of $\beta \in (1, 2]$ such that $f_\beta$ is a symmetric tent map with finite critical orbit. For $\beta \in D$, by inserting suitable intervals at the preimages of the critical periodic orbit of $f_{\beta}$, and defining suitable maps on them, we construct a $C^1$ unimodal map $g_\beta$ admitting thick hyperbolic repelling invariant Cantor set which contains a wild Cantor attractor of $\tilde{g}_\beta$.  The smoothness of $\tilde{g}_\beta$ is ensured by the effective estimation of the preimages of the critical point as well as the prescribed lengths of the inserted intervals. Furthermore, $D$ is dense in $(1, 2]$, and $\tilde{g}_\beta$  can not be $C^{1+\alpha}$ because the hyperbolic repelling invariant Cantor set of $C^{1+\alpha}$ map has Lebesgue measure equal to zero. In fact, the map $\tilde{g}_\beta$ constructed above has two metric attractors. One of them is a attracting periodic orbit which  absorbs the interiors of all of the inserted intervals except for a nowhere dense countable set $\tilde{\Lambda}_{\beta}$, it is also a topological attractor.  The collection of points do not attracted by the attracting periodic orbit is $ \tilde{A}_{\beta} \bigcup \tilde{\Lambda}_{\beta}$, where $ \tilde{A}_{\beta}$ is a hyperbolic repelling invariant Cantor set with positive Lebesgue measure, and $\tilde{A}_{\beta}$ contains an metric attractor $\tilde{A}_{\beta}^\ast$. $\tilde{A}_{\beta}^\ast$ is wild because its basin is nowhere dense.

For any Borel set $A$ of the real line, $m(A)$ denotes the Lebesgue measure of $A$ and $\sharp(A)$ denotes the cardinal of $A$. If $A$ is an interval, we denote the length of $A$ by $|A|$.

Let
$$\lambda_{\beta,1}=\frac{1}{\beta}, \quad \lambda_{\beta,n}=\frac{n-1}{\beta(n+1)}, n>1, \quad a_{\beta,n}=\prod\limits_{k=1}^n\lambda_{\beta,k}=\frac{1}{\beta^n}\frac{2}{n(n+1)}.$$
Notice that
$$a_{\beta,n}\rightarrow 0,\quad \frac{a_{\beta,n-1}}{a_{\beta,n}}=\frac{\beta(n+1)}{n-1}>\beta, \quad \frac{a_{\beta,n-1}}{a_{\beta,n}}\rightarrow \beta \quad (n\rightarrow +\infty).$$

Suppose the orbit of the critical point $c$ of $f_\beta$ is finite. Let the orbit of $c$ be $orb(c)=\{c, c_1, \cdots , c_t, \cdots , c_{t+m-1}\}$, where $c_i=f^i_\beta(c)$ and $c_t$ is $m$-periodic. Put
$$C_{\beta,n}=\{x|f_{\beta}^n(x)=c_t,\ f_{\beta}^s(x)\neq c_t, 0\leq s<n, \  x\in I\}, n\geq 0,$$
$$C_{\beta}=\cup_{n=0}^{\infty}C_{\beta,n},\ \ F_{\beta,n}=\sharp\{C_{\beta,n}\}.$$
Notice that $C_{\beta,n}$ denotes the set of $n$-th order preimages of $c_t$, and $C_{\beta,i}\cap C_{\beta,j}=\varnothing, i\neq j$. One can check that $C_{\beta}$ is the preimages set of orbit $orb(c)$. $F_{\beta,n}$ denotes the number of preimages of $c_t$ with first hitting time equal to $n$.


The main result of the paper is the following Theorem.

\begin{theorem}\label{theob}
Let $D$ be the set of $\beta \in (1, 2]$ such that $f_\beta$ is a symmetric tent map with finite critical orbit. For $\beta \in D$, $f_{\beta}$ could be modified via Denjoy like surgery to be a $C^1$ unimodal map $\tilde{g}_{\beta}$ on $[0,1]$ admitting a thick hyperbolic repelling invariant Cantor set $\tilde{A}_\beta$, and $\tilde{g}_{\beta}$ contains a wild attractor $\tilde{A}_\beta^\ast\subset \tilde{A}_\beta$.
\end{theorem}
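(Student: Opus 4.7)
The plan is to perform a Denjoy-like surgery on $f_\beta$: blow up every point $p\in C_\beta$ into an open gap $J_p$ of prescribed length $a_{\beta,n}$ (where $p\in C_{\beta,n}$), extend the dynamics smoothly across the gaps, and steer all gap orbits into an attracting periodic orbit built on the $m$ gaps sitting at $c_t,\ldots,c_{t+m-1}$. First I would check that the total inserted length $L:=\sum_{n\geq 0}F_{\beta,n}\,a_{\beta,n}$ is strictly less than $1$: the entropy bound $F_{\beta,n}\lesssim\beta^n$ (sharp for tent maps up to a small constant) combined with $a_{\beta,n}=\beta^{-n}\cdot 2/(n(n+1))$ makes the series dominated by the telescoping $\sum 2/(n(n+1))$, and a careful count of $F_{\beta,n}$ keeps $L<1$. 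Next I take a monotone continuous surjection $\pi\colon[0,1]\to[0,1]$ that collapses each $J_p$ onto $p$ and is a homeomorphism elsewhere, and set $\tilde A_\beta:=[0,1]\setminus\bigsqcup_p J_p$. On $\tilde A_\beta$ I define $\tilde g_\beta$ through the semi-conjugacy $\pi\circ\tilde g_\beta=f_\beta\circ\pi$; on each $J_p$ with $p\in C_{\beta,n}$, $n\geq 1$, I define $\tilde g_\beta$ as a $C^1$ diffeomorphism onto $J_{f_\beta(p)}$ matching the one-sided slopes $\pm\beta$ of $f_\beta$ at $\partial J_p$; and on the $m$ periodic-orbit gaps I prescribe a smooth cycle whose $m$-th iterate has exactly one attracting fixed point per gap.

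The hard part is $C^1$ smoothness, specifically showing that $D\tilde g_\beta$ extends continuously to $\tilde A_\beta$ with value $\pm\beta$. The key observation is
\[
\frac{a_{\beta,n-1}}{a_{\beta,n}}=\frac{\beta(n+1)}{n-1}=\beta+\frac{2\beta}{n-1},
\]
so the mean slope of $\tilde g_\beta|_{J_p}$ for $p\in C_{\beta,n}$ converges to $\beta$ at rate $O(1/n)$; this is precisely the engineering role of the sequence $\{a_{\beta,n}\}$. I would build the pointwise derivative on each $J_p$ as a smooth profile with endpoint values $\pm\beta$ and total variation $O(1/n)$. For any $x\in\tilde A_\beta$ and any sequence $y_k\to x$ with $y_k\in J_{p_k}$, $p_k\in C_{\beta,n_k}$: since only finitely many gaps have order at most any fixed $N$ and $x\in\tilde A_\beta$ cannot accumulate any finite set of gaps (except possibly as an endpoint of one), one forces $n_k\to\infty$, hence $D\tilde g_\beta(y_k)\to\pm\beta$. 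Verifying this accumulation property carefully---particularly for boundary points of low-order gaps, which requires the fine preimage structure of $c_t$ under $f_\beta$---is the main technical obstacle.

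With $C^1$ smoothness established, the remaining properties are more routine. Thickness is immediate: $m(\tilde A_\beta)=1-L>0$. Forward invariance is built into the construction, and since every orbit in $\tilde A_\beta$ avoids the blown-up critical gap $J_c$, the slope $|D\tilde g_\beta|=\beta$ inherited from $f_\beta$ yields $|D\tilde g_\beta^n(x)|=\beta^n$ on $\tilde A_\beta$, giving hyperbolic repulsion with $C=1$ and $\lambda=\beta$. For the wild attractor, I would transfer the absolutely continuous invariant measure $\mu_f$ of $f_\beta$ (guaranteed by classical tent-map theory) through $\pi^{-1}$ to an ergodic $\tilde g_\beta$-invariant probability measure $\tilde\mu$ concentrated on $\tilde A_\beta$ and absolutely continuous with respect to Lebesgue on $\tilde A_\beta$ (since $\pi$ is $1$-Lipschitz, Lebesgue-null sets in $\tilde A_\beta$ map to Lebesgue-null sets, so $\tilde\mu\ll m$), and set $\tilde A_\beta^*:=\mathrm{supp}\,\tilde\mu$. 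The Birkhoff ergodic theorem then gives $\omega_{\tilde g_\beta}(x)=\tilde A_\beta^*$ for $\tilde\mu$-a.e.\ $x$, hence for a Lebesgue-positive set of $x\in\tilde A_\beta$, so $\tilde A_\beta^*$ is a metric attractor. But $\tilde A_\beta$, being the complement of the dense open set $\bigsqcup_p J_p$, is nowhere dense, so the basin of $\tilde A_\beta^*$ is nowhere dense, while the attracting periodic orbit from the construction has basin $\bigsqcup_p J_p$ modulo a countable set and is therefore the topological attractor with residual basin. Thus $\tilde A_\beta^*\subset\tilde A_\beta$ is wild in Milnor's sense, completing the proof.
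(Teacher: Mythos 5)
There is a genuine gap in your $C^1$ argument. You treat the problem as showing that $D\tilde g_\beta(y)\to\pm\beta$ as $y\to x$ with $y$ running through the inserted gaps; this addresses only the \emph{continuity} of $D\tilde g_\beta$ at $x\in\tilde A_\beta$, \emph{assuming} the derivative there exists and equals $\pm\beta$. But a point of $\tilde A_\beta$ that is not a gap endpoint is a two-sided accumulation point of the Cantor set itself, and $\tilde g_\beta$ is defined on $\tilde A_\beta$ only through the semi-conjugacy $\pi\circ\tilde g_\beta=f_\beta\circ\pi$; convergence of $D\tilde g_\beta$ along nearby gaps says nothing by itself about the difference quotient $\bigl(\tilde g_\beta(z)-\tilde g_\beta(x)\bigr)/(z-x)$. (Think of a strictly monotone function that is smooth with slope $2$ on each gap of a fat Cantor set: the gap-derivative extends continuously to $2$ everywhere, yet the function need not be differentiable on the Cantor set unless one also controls how lengths are scaled \emph{inside} the Cantor set.) The paper closes this by a measure-ratio argument: writing $[x,z]=X_1\sqcup X_2$ with $X_1=[x,z]\cap B_\beta$ and $X_2=[x,z]\cap A_\beta$, it shows $m(g_\beta(X_2))=\beta\,m(X_2)$ directly from the semi-conjugacy (slope $\beta$ of $f_\beta$), and then estimates $m(g_\beta(X_1))/m(X_1)$ by comparing the families $J(z)$, $J'(z)$ of gaps meeting and contained in $[x,z]$ and invoking $a_{\beta,n-1}/a_{\beta,n}\to\beta$. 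Together these give $\lim_{z\to x}m(g_\beta([x,z]))/m([x,z])=\beta$, which is the actual differentiability statement. You need this step (or an equivalent absolute-continuity argument plus the fundamental theorem of calculus) before the ``continuous extension of $D\tilde g_\beta$'' claim has any force.

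Two secondary points. First, $L<1$ is not generally true with the lengths $a_{\beta,n}$ plus unit-length gaps on $\mathrm{orb}(c)$: for $\beta=2$ the total inserted length is $19/6$. The paper avoids the issue by building $g_\beta$ on an enlarged interval $I_\beta$ and rescaling by an affine diffeomorphism at the end (which preserves $C^1$ and the derivative modulus); rescaling all gap lengths by a small constant would repair your version, but the claim as stated is false. Second, your wild-attractor route is genuinely different from the paper's: you push the tent map's ACIP through $\pi^{-1}$ and take $\tilde A_\beta^\ast=\mathrm{supp}\,\tilde\mu$, whereas the paper identifies $\tilde A_\beta^\ast$ combinatorially as $\tilde A_\beta$ intersected with the (possibly renormalized) core of $g_\beta$ and argues metric attraction from topological transitivity / indecomposability, splitting into the non-renormalizable case $\beta\in(\sqrt2,2]$ and the finitely renormalizable case $\beta\in(1,\sqrt2]$. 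Your measure-theoretic route is viable but you must spell out that $\tilde\mu$ is ergodic (which follows from $\pi$ being injective off a countable set, hence a measure-isomorphism) and that ergodicity rules out a proper forward-invariant closed subset of $\tilde A_\beta^\ast$ with positive-measure basin; also note that the paper obtains $F_{\beta,n}\le M_\beta\beta^n$ via Perron--Frobenius on the transition matrix of the associated subshift of finite type, rather than by citing a generic entropy bound.
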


\begin{remark}
\noindent
\myproof
\begin{enumerate} [(1)]
\item The key of the surgery is to control the total length of inserted intervals. For $\beta \in D$, as we shall see in Proposition 1, there exists a constant $M_{\beta}$ such that  $F_{\beta, n} \leq M_{\beta} \beta^n$, which implies
    $$\sum\limits_{n=1}^{\infty}F_{\beta,n} a_{\beta,n}<+\infty.$$
    As a result, the total length of inserted intervals is finite.
\item According to \cite{de2012one} (page 222, Theorem 2.6), if $f: I\rightarrow I$ is $C^{1+\alpha}$ with $\alpha >0$, and $K\subset I$ is a compact, forward invariant, hyperbolic set for $f$, then $K=I$ or $K$ has Lebesgue measure equal to zero. Therefore, one should not expect that $\tilde{g}_{\beta}$ is $C^{1+\alpha}$ with $\alpha >0$.
\item $f_\beta$ admits finite critical orbit if and only if the critical point $c$ of $f_\beta$ is either periodic or eventually periodic. According to \cite{brucks2004topics}(page 158, Lemma 10.3.4), the set $D\cap (\sqrt{2},2]$ is dense in $(\sqrt{2},2]$. Notice that $f_\beta$ can be renormalized finite times via period-doubling renormalization if $\beta\in (1,\sqrt{2}]$, $D$ is also dense in $(1,2]$.
\item As we shall see in Section 3, $|\tilde{g}'_{\beta}(x)|=\beta >1$ as $x\in \tilde{A}_\beta$ and the Lebesgue measure of $\tilde{A}_\beta$ is positive. It follows that $\tilde{A}_{\beta}$ is a thick hyperbolic repelling  invariant Cantor set of $\widetilde{g}_{\beta}$, which contains a wild attractor $\tilde{A}^\ast_\beta$ with $h_{top}(\tilde{g}_{\beta}|_{\tilde{A}^\ast_\beta})=\log\beta >0$. $\tilde{A}_\beta^\ast$ attracts
    points in the Cantor set $\tilde{A}_\beta$ and countable set $\tilde{\Lambda}_\beta$, and the basin of attraction $\mathbb{B}(\tilde{A}_\beta^\ast)$ is nowhere dense.
\end{enumerate}
\end{remark}

For $\beta\in D$, we first construct a unimodal map $g_\beta$ over a finite interval by Denjoy like surgery, then transform $g_\beta$ to be a unit interval unimodal map $\tilde{g}_\beta$ which is topologically conjugated to $g_\beta$ by an affine diffeomorphism. More precisely, suppose $g_\beta$ is a map defined on interval $I_\beta:=[a_\beta,b_\beta]$ and $\varphi(x):[a_\beta,b_\beta]\rightarrow [0,1],x\mapsto \frac{x-a_\beta}{b_\beta-a_\beta}$, then put $\tilde{g}_\beta=\varphi\circ g_\beta \circ \varphi^{-1}$. In order to ensure the smoothness of $g_\beta$, we have to perform Denjoy like surgery on the tent map $f_\beta$ since we need to control the endpoint derivatives of inserted intervals to be equal to $\beta$ or $-\beta$. And we can only insert intervals in the preimages set of the periodic orbit of $c$, otherwise $g_\beta$ is not differentiable at the critical point $c=\frac{1}{2}$. The key of the surgery is to control the total length of the inserted intervals, which could be estimated by Perron-Frobenius theory as $\beta\in D$.

The remaining parts of the paper are organized as follows. In Section 2, we show two examples $f_\beta$ with $\beta=2$ and $\beta=(\sqrt{5}+1)/2=1.618\cdots$  to illustrate how to operate Denjoy like surgery  to construct  unimodal maps $g_\beta$ admitting thick hyperbolic repelling invariant Cantor sets. For the case $\beta=2$, the preimages of critical periodic orbit can be easily counted. The essential part is to show the smoothness of $g_\beta$. For the case $\beta=(\sqrt{5}+1)/2$, we illustrate how to count the preimages of the critical point $c=\frac{1}{2}$. In Section 3, we will construct $g_\beta$ for $\beta \in D$, and prove that it is a $C^1$ map admitting a thick hyperbolic repelling invariant Cantor set which contains a wild attractor.

\section{Two examples of $g_\beta$}
\setcounter{proposition}{0}
\renewcommand{\theproposition}{\arabic{proposition}}
\setcounter{theorem}{0}
\renewcommand{\thetheorem}{\arabic{theorem}}
\setcounter{lemma}{0}
\renewcommand{\thelemma}{\arabic{lemma}}
\noindent

In this section, considering the cases $\beta=2$ and $\beta=(\sqrt{5}+1)/2$, we will by Denjoy like surgery construct $C^1$ unimodal maps $\tilde{g}_\beta$ admitting thick hyperbolic repelling invariant Cantor sets which contain wild attractors. Before constructing $\tilde{g}_\beta$, we give two Lemmas.

\begin{lemma}\label{diffeomorphism of left}
Suppose $\frac{v_2-u_2}{v_1-u_1}\geq\beta>0$. Let
$$h(x):=u_2+\int_{u_1}^{x}
\beta+\frac{6[(v_2-u_2)-\beta(v_1-u_1)]}{(v_1-u_1)^3}(v_1-t)(t-u_1) dt,$$
$$r(x):=v_2+\int_{u_1}^{x}
-\beta+\frac{6[(v_2-u_2)-\beta(v_1-u_1)]}{(v_1-u_1)^3}(t-v_1)(t-u_1) dt.$$
Then $h(x)$ is a diffeomorphism from $[u_1,v_1]$ to $[u_2,v_2]$ with $h'(x)\geq\beta$ and $h'(u_1)=h'(v_1)=\beta$, and $r(x)$ is a diffeomorphism from $[u_1,v_1]$ to $[u_2,v_2]$ with $r'(x) \leq -\beta$ and $r'(u_1)=r'(v_1)=-\beta$.
\end{lemma}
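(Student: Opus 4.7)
The plan is to verify directly from the explicit formulas that $h$ and $r$ have the claimed derivative bounds and map the endpoints correctly; smoothness and monotonicity then give the diffeomorphism property.

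First I would differentiate under the integral sign. Writing $L:=v_1-u_1$ and $A:=(v_2-u_2)-\beta(v_1-u_1)\geq 0$ (the sign coming from the hypothesis $\tfrac{v_2-u_2}{v_1-u_1}\geq\beta$), one gets
\[
h'(x)=\beta+\tfrac{6A}{L^3}(v_1-x)(x-u_1),\qquad r'(x)=-\beta+\tfrac{6A}{L^3}(x-v_1)(x-u_1).
\]
At $x=u_1$ and $x=v_1$ each quadratic factor vanishes, giving $h'(u_1)=h'(v_1)=\beta$ and $r'(u_1)=r'(v_1)=-\beta$. For $x\in(u_1,v_1)$, we have $(v_1-x)(x-u_1)>0$ and $(x-v_1)(x-u_1)<0$, and since $A\geq 0$, we conclude $h'(x)\geq\beta>0$ and $r'(x)\leq-\beta<0$ throughout. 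In particular $h$ is strictly increasing and $r$ strictly decreasing, both $C^1$.

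Second, I would check the endpoint values. Clearly $h(u_1)=u_2$ and $r(u_1)=v_2$ from the integral vanishing. For the other endpoint, the key computation is
\[
\int_{u_1}^{v_1}(v_1-t)(t-u_1)\,dt=\tfrac{L^3}{6},
\]
which is a direct substitution $s=t-u_1$. Hence $\int_{u_1}^{v_1}\tfrac{6A}{L^3}(v_1-t)(t-u_1)\,dt=A$, giving $h(v_1)=u_2+\beta L+A=v_2$. The analogous integral for $r$ differs by a sign, yielding $r(v_1)=v_2-\beta L-A=u_2$.

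Combining the two steps: $h\colon[u_1,v_1]\to[u_2,v_2]$ is a $C^1$ bijection with strictly positive derivative bounded below by $\beta$, so its inverse is also $C^1$, and $h$ is a diffeomorphism with the prescribed boundary derivatives; the same argument applied to $r$ (with the orientation reversed) finishes the lemma. There is no real obstacle here—the proof is simply bookkeeping on an explicit quadratic bump added to the affine map between the intervals; the only nontrivial input is the elementary integral $\int_{u_1}^{v_1}(v_1-t)(t-u_1)\,dt=L^3/6$, which is precisely why the normalization factor $6/(v_1-u_1)^3$ appears in the definition.
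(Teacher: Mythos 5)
Your proof is correct and follows essentially the same route as the paper's: differentiate the explicit formula, observe that the quadratic factor vanishes at the endpoints, use $A=(v_2-u_2)-\beta(v_1-u_1)\geq 0$ to bound $|h'|$ and $|r'|$ below by $\beta$ on the interior, and conclude monotonicity and hence the diffeomorphism property. If anything you are slightly more careful than the paper at the bookkeeping step of verifying $h(v_1)=v_2$ and $r(v_1)=u_2$ via the integral $\int_{u_1}^{v_1}(v_1-t)(t-u_1)\,dt=L^3/6$, which the paper leaves as ``one can check.''
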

\begin{proof}
\myproof
One can check that $h(u_1)=u_2, h(v_1)=v_2$, and
$$h'(x)=\beta+\frac{6[(v_2-u_2)-\beta(v_1-u_1)]}{(v_1-u_1)^3}(v_1-x)(x-u_1),$$
which follows that $h'(u_1)=h'(v_1)=\beta$.
Since $v_2-u_2\geq \beta(v_1-u_1)$, we have
$$\frac{6[(v_2-u_2)-\beta(v_1-u_1)]}{(v_1-u_1)^3}>0.$$
Thus $h'(x)$ obtains the minimum value at the endpoint $u_1$ or $v_1$, which follows that $h'(x)\geq\beta$ for each point $x\in [u_1, v_1]$, i.e., $h(x)$ is monotonically increasing. Therefore, $h(x)$ is a diffeomorphism.

The proof of $r(x)$ is the same as the proof of $h(x)$
$\hfill\square$
\end{proof}

\begin{lemma}\label{left}
Suppose $v_1-u_1=v_2-u_2=1, \ 1<\beta\leq 2$. Let
$$g(x):=u_2+\int_{u_1}^{x}\beta+6(\beta-1)(t-u_1)(t-v_1) dt,$$
$$w(x):=v_2+\int_{u_1}^{x}-\beta+6(1-\beta)(t-u_1)(t-v_1) dt.$$
Then $g(x)$ is a diffeomorphism from $[u_1,v_1]$ to $[u_2,v_2]$ with $g'(x)>0$ and $g'(u_1)=g'(v_1)=\beta$, and $w(x)$ is a diffeomorphism from $[u_1,v_1]$ to $[u_2,v_2]$ with $w'(x)<0$ and $w'(u_1)=w'(v_1)=-\beta$.
\end{lemma}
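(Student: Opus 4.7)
The plan is to verify both claims by direct differentiation under the integral sign, then to confirm endpoint values and monotonicity by bounding the derivative on $[u_1,v_1]$; the arguments for $g$ and $w$ run in parallel. First I would compute $g'(x) = \beta + 6(\beta-1)(x-u_1)(x-v_1)$ and $w'(x) = -\beta + 6(1-\beta)(x-u_1)(x-v_1)$ by the fundamental theorem of calculus. The quadratic factor $(x-u_1)(x-v_1)$ vanishes at both endpoints, which instantly gives $g'(u_1) = g'(v_1) = \beta$ and $w'(u_1) = w'(v_1) = -\beta$, matching the two boundary derivative requirements.

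The heart of the argument exploits $v_1 - u_1 = 1$: the parabola $\phi(x) := (x-u_1)(x-v_1)$ ranges over $[-\tfrac14,\,0]$ on $[u_1,v_1]$, attaining its minimum at the midpoint. Since $6(\beta-1)>0$, the function $g'$ is minimized at the midpoint, where it equals $\beta - \tfrac{3(\beta-1)}{2} = \tfrac{3-\beta}{2}$; the hypothesis $\beta \leq 2$ gives $g'(x) \geq \tfrac12 > 0$, so $g$ is strictly increasing. Symmetrically, $6(1-\beta)<0$ forces $w'$ to be maximized at the midpoint with value $\tfrac{\beta-3}{2} \leq -\tfrac12 < 0$, making $w$ strictly decreasing. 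For surjectivity onto $[u_2, v_2]$, I would use the elementary identity $\int_{u_1}^{v_1} \phi(t)\,dt = -\tfrac{1}{6}$ (from the substitution $s = t - u_1$) to compute $g(v_1) = u_2 + \beta - (\beta-1) = v_2$ and $w(v_1) = v_2 - \beta + (\beta-1) = u_2$, while $g(u_1) = u_2$ and $w(u_1) = v_2$ hold by construction. Together with smoothness and strict monotonicity, this promotes $g$ and $w$ to diffeomorphisms.

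The only subtlety worth flagging is why Lemma~\ref{left} cannot be obtained as a corollary of Lemma~\ref{diffeomorphism of left}: the hypothesis $\tfrac{v_2-u_2}{v_1-u_1} \geq \beta$ of the earlier lemma fails here, since both differences equal $1$ while $\beta > 1$. Consequently the interior value of $g'$ dips below its endpoint value $\beta$, and positivity must be recovered from the upper bound $\beta \leq 2$ rather than from a uniform $g' \geq \beta$ estimate. This is the one place where a naive mimicking of the previous proof would fail, but the explicit midpoint calculation resolves it cleanly and is the essential new ingredient.
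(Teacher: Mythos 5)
Your proof is correct and complete, and it supplies a verification that the paper itself omits (Lemma~\ref{left} is stated without proof, evidently regarded as analogous to Lemma~\ref{diffeomorphism of left}). You correctly compute $g'(x)=\beta+6(\beta-1)(x-u_1)(x-v_1)$ and $w'(x)=-\beta+6(1-\beta)(x-u_1)(x-v_1)$, read off the endpoint derivatives from the vanishing of the quadratic factor, locate the extremum of each derivative at the midpoint of $[u_1,v_1]$ using $v_1-u_1=1$, and invoke $\beta\leq 2$ to conclude $g'\geq(3-\beta)/2>0$ and $w'\leq(\beta-3)/2<0$. The identity $\int_{u_1}^{v_1}(t-u_1)(t-v_1)\,dt=-\tfrac16$ then pins down the boundary values $g(v_1)=v_2$, $w(v_1)=u_2$, giving surjectivity and hence the diffeomorphism property.

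Your closing remark is the most valuable part and worth retaining: with $v_2-u_2=v_1-u_1=1$ the hypothesis $\tfrac{v_2-u_2}{v_1-u_1}\geq\beta$ of Lemma~\ref{diffeomorphism of left} fails (it would force $\beta\leq 1$), the quadratic coefficient changes sign, and the derivative dips \emph{below} its endpoint value $\beta$ in the interior rather than staying above it. So the monotonicity argument from the earlier lemma does not transfer; positivity of $g'$ (and negativity of $w'$) here hinges on the upper bound $\beta\leq 2$ together with the unit-length normalization, exactly as your midpoint computation makes explicit. This is a genuine difference in mechanism between the two lemmas that the paper's exposition glosses over.
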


We label the point $x\in C_{\beta,n}, n=0,1,2,\cdots$ by a sequence $s(x)=s_n s_{n-1} \cdots s_0$ of 0's, c and 1's of length $n+1$, where $s_i$ is defined by
$$
s_i=\left \{
\begin{array}{ll}
0,& f_{\beta}^{n-i}(x)<c\\
c,& f_{\beta}^{n-i}(x)=c\\
1,& f_{\beta}^{n-i}(x)>c.
\end{array}
\right.
$$

Now we sketch the construction of $\tilde{g}_\beta$.

We insert an open interval at each point in $C_\beta$ to obtain a Cantor set $A_\beta$. In particular, if $0$ and $1$ belong to $C_\beta$ , we insert an interval of the form $[s,t)$ and $(s,t]$ at $0$ and $1$, respectively. More precisely, we insert an open interval with length 1 at each point $x\in orb(c)$ respectively. At each point $x\in C_{\beta,n}\setminus orb(c)$ labeled by the sequence $s(x)=s_n s_{n-1} \cdots s_0$, we insert an open interval
$J_{s_n s_{n-1} \cdots s_0}$ with length $a_{\beta,n}$, i.e., $|J_{s_n s_{n-1} \cdots s_0}|=a_{\beta,n}$. We denote the new interval by $I_\beta:=A_\beta\bigcup B_\beta$, where $B_\beta$ is the union of the intervals inserted over the unit interval $I$, and $A_\beta$ is the complement of $B_\beta$.

Define a $C^1$ diffeomorphism with derivatives of endpoints $\beta$ (or $-\beta$) on each inserted interval. For the interval $J_{s_n s_{n-1} \cdots s_0}$ inserted at $x\in C_{\beta,n} \backslash orb(c)$, we denote $J_{s_n s_{n-1} \cdots s_0}$ by $L(J_{s_n s_{n-1} \cdots s_0})$ if $x<c$, and $R(J_{s_n s_{n-1} \cdots s_0})$ if $x>c$. By Lemma \ref{diffeomorphism of left}, one can define a $C^1$ diffeomorphism $h_{s_n s_{n-1} \cdots s_0}: L(\bar{J}_{s_n s_{n-1} \cdots s_0})\rightarrow \bar{J}_{s_{n-1} s_{n-2} \cdots s_0}$ or a $C^1$ diffeomorphism $r_{s_n s_{n-1} \cdots s_0}: R(\bar{J}_{s_n s_{n-1} \cdots s_0})\rightarrow \bar{J}_{s_{n-1} s_{n-2} \cdots s_0}$, where $\bar{A}$ denotes the closure of $A$. Denote the intervals inserted at the $orb(c)=\{c, c_1, \cdots , c_t, \cdots , c_{t+m-1}\}$ as $J_0,J_1,\cdots, J_{t+m-1}$ respectively. Notice $f_\beta(c_{t+m-1})=c_t$, then we treat $J_{t+m}$ as $J_t$. By Lemma \ref{left}, one can define a $C^1$ diffeomorphism $g_i:\bar{J}_i\rightarrow \bar{J}_{i+1},1\leq i \leq t+m-1$ if $c_i<c$ and a $C^1$ diffeomorphism $w_i:\bar{J}_i\rightarrow \bar{J}_{i+1},1\leq i \leq t+m-1$ if $c_i>c$. One can also define a $C^1$ map $f:\bar{J}_0\rightarrow \bar{J}_1$ with unique turning point and $f'(p_0)=\beta,f'(q_0)=-\beta$, where $p_0,q_0$ are the left and right endpoint of interval $J_0$, respectively. We can extend the above maps to the interval $I_\beta$. Denote the map on the interval $I_\beta$ by $g_\beta$.

In fact, $A_\beta$ is a Cantor set with Lebesgue measure $m(A_\beta)=1$ since $C_\beta$ is a countable dense subset of $[0,1]$. Furthermore, one can check that $A_\beta$ is a hyperbolic repelling invariant Cantor set which contains a wild attractor of the dynamical system $(I_\beta,g_\beta)$. Notice that the inserted intervals except for countable points are attracted by the intervals inserted at the orbit $orb(c_t)$ and $A_\beta$ is attracted by itself.

If $|I_\beta|<+\infty$, denote $I_\beta=[a_\beta,b_\beta]$. Define $\varphi(x):[a_\beta,b_\beta]\rightarrow [0,1],x\mapsto \frac{x-a_\beta}{b_\beta-a_\beta}$ and $\tilde{g}_\beta:=\varphi\circ g_\beta \circ \varphi^{-1}$. $\tilde{g}_\beta$ and $g_\beta$ have the same topological properties since they are topologically conjugated. $\varphi$ and $\varphi^{-1}$ are $C^1$, then $\tilde{g}_\beta$ will be $C^1$ if $g_\beta$ is $C^1$. It is possible to make $g_\beta$ be a $C^1$ map by controlling the length of each inserted interval and its derivative at the endpoints of the interval.

In subsection \ref{subsection2.1}, we shall construct example $\tilde{g}_2$ and give a detailed proof of the smoothness of $\tilde{g}_2$. In subsection \ref{subsection2.2}, we present another example $\tilde{g}_\beta$ with $\beta=(\sqrt{5}+1)/2$ to illustrate how to count the preimages of critical point $c=\frac{1}{2}$.

\subsection{The case $\beta=2$}\label{subsection2.1}

One can check that $orb(c)=\{c,1,0\}$ and the point 0 is a fixed point of $f_\beta$ as $\beta=2$. The preimage tree of 0 as $\beta=2$, see Figure \ref{figure 1}.

\begin{figure}[h]
\centering
\includegraphics[scale=0.7]{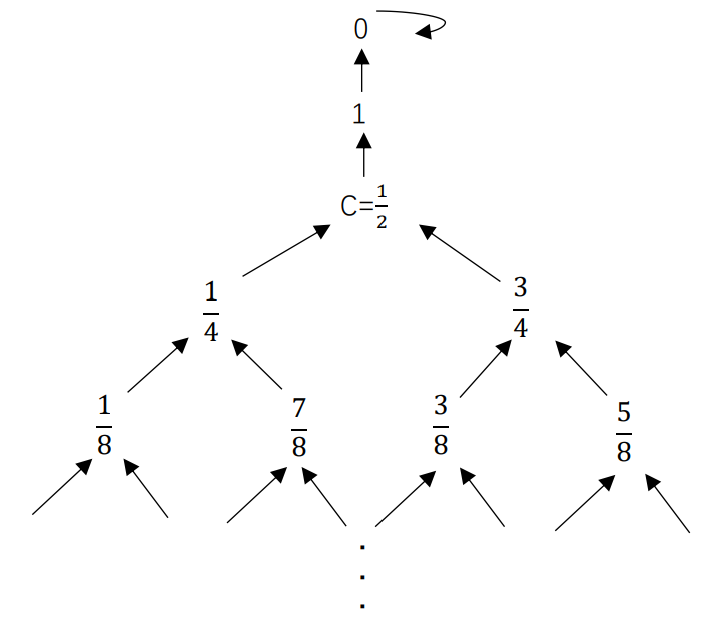}
\caption{The preimage tree of 0 as $\beta=2$.}
\label{figure 1}
\end{figure}

It is obviously seen that $F_{2,0}=F_{2,1}=1, F_{2,n}=2^{n-2},n\geq 2$ and
$$\sum\limits_{n=3}^{\infty}F_{2,n} a_{2,n}=\sum\limits_{n=3}^{\infty}2^{n-2} a_{2,n}=\sum\limits_{n=3}^{\infty}2^{n-2}\frac{1}{2^n}\frac{2}{n(n+1)}=\frac{1}{6}.$$
Hence the total length of the inserted intervals is $$1+1+1+\sum\limits_{n=3}^{\infty}F_{2,n} a_{2,n}=\frac{19}{6}.$$

In what follows, we will give the detailed procedure of the Denjoy like surgery.

Notice that $$a_{2,n}=\frac{1}{2^{n-1}}\frac{1}{n(n+1)},\quad \frac{a_{2,n-1}}{a_{2,n}}=\frac{2^{n-1}}{2^{n-2}}\frac{n(n+1)}{n(n-1)}=\frac{2(n+1)}{n-1}>2,$$
then $|J_{s_{n-1} s_{n-2} \cdots s_0}| \geq 2|J_{s_n s_{n-1} \cdots s_0}|$ if the point $x\notin orb(c)$ that labeled by the sequence $s(x)=s_{n-1} s_{n-2} \cdots s_0$. By Lemma \ref{diffeomorphism of left}, one can define a $C^1$ diffeomorphism $h_{s_n s_{n-1} \cdots s_0}: L(\bar{J}_{s_n s_{n-1} \cdots s_0})\rightarrow \bar{J}_{s_{n-1} s_{n-2} \cdots s_0}$ and a $C^1$ diffeomorphism $r_{s_n s_{n-1} \cdots s_0}: R(\bar{J}_{s_n s_{n-1} \cdots s_0})\rightarrow \bar{J}_{s_{n-1} s_{n-2} \cdots s_0}$. Denote the intervals inserted at the $orb(c)=\{c, 1, 0\}$ by $J_i=(p_i,q_i),i=0,1,2$ respectively. Notice that $q_0-p_0=q_1-p_1=1$, then one can define a map $f(x):[p_0,q_0]\rightarrow [p_1,q_1]$ as
$$
f(x)=\left \{
\begin{array}{ll}
f_1(x),& x\in [p_0,p_0+\frac{1}{2}]\\
f_2(x),& x\in (p_0+\frac{1}{2},q_0],
\end{array}
\right.
$$
where
$$f_1(x)=-8(x-p_0)^3+4(x-p_0)^2+2(x-p_0)+p_1,$$
$$f_2(x)=8(x-p_0-\frac{1}{2})^3-8(x-p_0-\frac{1}{2})^2+q_1.$$
One can check that $f(x)$ is a $C^1$ unimodal onto map with $f'(p_0)=2,f'(q_0)=-2$ and $f(p_0)=f(q_0)=p_1$.

By Lemma \ref{left}, one can define a $C^1$ diffeomorphism $w(x):\bar{J}_1\rightarrow \bar{J}_2$,  and a $C^1$ diffeomorphism $g(x):\bar{J}_2\rightarrow \bar{J}_2$. Define a map $g_2$ by
$$
g_2=\left \{
\begin{array}{ll}
h_{s_n s_{n-1} \cdots s_0}(x),& x\in L(\bar{J}_{s_n s_{n-1} \cdots s_0})\\
r_{s_n s_{n-1} \cdots s_0}(x),& x\in R(\bar{J}_{s_n s_{n-1} \cdots s_0})\\
f(x),& x\in \bar{J}_0\\
w(x),& x\in \bar{J}_1\\
g(x),& x\in \bar{J}_2,
\end{array}
\right.
$$
then we extend the map $g_2$ to the interval $I_2$, and the map on the interval $I_2$ is still denoted by $g_2$. Let $(e-,e+)$ be the interval inserted at the point $e$ such that $f_\beta^n(e)=0$ for some $n\geq 0$. The graph of map $g_2$ restricted to some intervals is given in Figure \ref{figure 2}.  The inserted intervals are attracted by an interior point $x_\ast$ except for a countable set $\cup_{i=0}^{+\infty}g_\beta^{-i}(0-)$, because $0-$ is a repelling fixed point. Notice that $0-$ does not belong to $A_2$ and $A_2$ is attracted by itself.

\begin{figure}[h]
\centering
\includegraphics[scale=0.7]{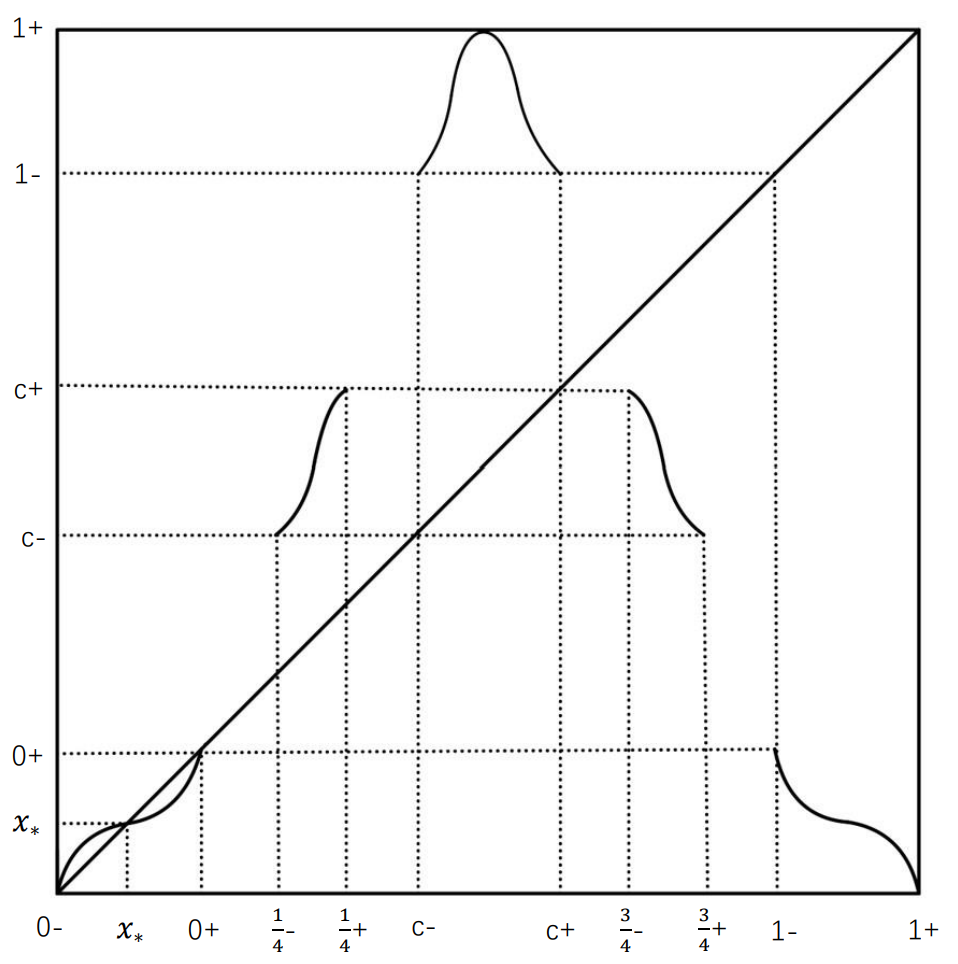}
\caption{The graph of map $g_2$ restricted to some intervals.}
\label{figure 2}
\end{figure}

Suppose $I_2=[a_2,b_2]$. Define an affine diffeomorphism $\varphi(x):[a_2,b_2]\rightarrow [0,1],x\mapsto \frac{x-a_2}{b_2-a_2}$ and $\tilde{g}_2:=\varphi\circ g_2 \circ \varphi^{-1}$.

\begin{theorem} \label{theorem example 1}
$\tilde{g}_2$ is a symmetric $C^1$  unimodal map on $[0,1]$ admitting a thick hyperbolic repelling invariant Cantor set which is also a wild attractor.
\end{theorem}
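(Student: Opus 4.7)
The plan is to establish, in turn: (a) $C^1$ regularity of $\tilde{g}_2$; (b) symmetric unimodality; (c) $\tilde{A}_2$ is a Cantor set, hyperbolic repelling, of positive Lebesgue measure; and (d) $\tilde{A}_2$ is a wild attractor. Since $\varphi$ is an affine diffeomorphism, it suffices to prove each property for $g_2$ on $I_2$ and transfer.

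For (a), on every inserted interval the formulas from Lemmas \ref{diffeomorphism of left} and \ref{left}, together with the explicit cubic spline $f$ on $\bar{J}_0$, give a $C^1$ map whose one-sided endpoint derivatives are $\pm 2$; the nontrivial step is continuity of $g_2'$ at every $y \in A_2$. The one-sided derivatives at $y$ from any abutting inserted interval equal $\pm 2$ by construction, so it suffices to show that if $y_n \to y$ with $y_n$ lying in an inserted interval $J_{s(n)}$ of level $k(n)$, then $|g_2'(y_n)| \to 2$. Since $y \in A_2$ lies in no inserted interval, $|J_{s(n)}| = a_{2,k(n)} \to 0$, forcing $k(n) \to \infty$; once $k(n) \geq 4$ the target of $h_{s(n)}$ (or $r_{s(n)}$) has level $k(n)-1$ and length $a_{2,k(n)-1}$, and maximising the derivative formula of Lemma \ref{diffeomorphism of left} yields
\[
\bigl|\,|g_2'(y_n)| - 2\,\bigr| \;\leq\; \frac{3}{2}\!\left(\frac{a_{2,k(n)-1}}{a_{2,k(n)}} - 2\right) \;=\; \frac{6}{k(n)-1} \;\longrightarrow\; 0.
\]
This proves $g_2 \in C^1(I_2)$, hence $\tilde{g}_2 \in C^1([0,1])$. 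For (b), the involution $x \mapsto 1-x$ preserves $orb(c)$ and each $C_{2,n}$, the length schedule $a_{2,n}$ depends only on level, and the spline $f$ on $\bar{J}_0$ is symmetric about its midpoint (a direct check).

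For (c), density of $C_2$ in $[0,1]$ makes $A_2$ closed, perfect, and nowhere dense in $I_2$, hence a Cantor set; summing the prescribed lengths gives $|I_2| = 25/6$ and $m(\tilde{A}_2) = 6/25 > 0$. The invariance $f_2(C_2) \subseteq C_2$ lifts to $g_2(A_2) \subseteq A_2$, and by the continuity established in (a) we get $|g_2'| \equiv 2$ on $A_2$, so $|Dg_2^n| \equiv 2^n$ there---hyperbolic repelling. For (d), let $\pi: I_2 \to [0,1]$ collapse each inserted interval to its attached point of $C_2$; then $\pi \circ g_2 = f_2 \circ \pi$ and $\pi(A_2) = [0,1]$. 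Since $f_2$ is mixing and preserves Lebesgue, $\omega_{f_2}(x) = [0,1]$ for a.e.\ $x$, which lifts to $\omega_{g_2}(y) = A_2$ for a.e.\ $y \in A_2$, identifying $\tilde{A}_2$ as the smallest metric attractor inside itself. Its basin is $\tilde{A}_2 \cup \tilde{\Lambda}_2$, of positive measure (contains $\tilde{A}_2$) but meager (both pieces are nowhere dense), so $\tilde{A}_2$ is a metric but not topological attractor---a wild attractor.

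The main obstacle is the quantitative continuity of $g_2'$ on $A_2$: one needs both the combinatorial fact that inserted intervals accumulating at $y \in A_2$ must have level $n \to \infty$, and the $O(1/n)$ bound on the deviation of $g_2'$ from $\pm 2$ on a level-$n$ inserted interval supplied by Lemma \ref{diffeomorphism of left}. This is precisely where the specific length schedule $a_{\beta,n}$ is used essentially---an arbitrary summable length sequence would not produce the derivative control needed for continuity on $A_2$.
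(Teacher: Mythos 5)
Your overall outline follows the paper closely, and several of its pieces are right: the $O(1/n)$ bound on the derivative inside a level-$n$ inserted interval (your $\frac{6}{k(n)-1}$ is exactly the paper's estimate), the length computation $|I_2|=25/6$, the invariance $g_2(A_2)\subset A_2$, and the use of transitivity/ergodicity to show $A_2$ is the smallest metric attractor inside itself (you phrase it via the semiconjugacy $\pi$ to the full tent map; the paper phrases it via topological transitivity of $g_2|_{A_2}$, but these are the same idea).

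However there is a genuine gap in (a). You write that ``the one-sided derivatives at $y$ from any abutting inserted interval equal $\pm 2$ by construction, so it suffices to show'' continuity of $g_2'$ via the level-$n$ estimate. This establishes continuity of the candidate derivative $\phi$ (with $\phi\equiv 2$ on $A_2$) but does not establish that $g_2$ is actually \emph{differentiable} at a point $y\in A_2$ and that $g_2'(y)=2$. For $y\in A_2$ that is not an endpoint of any inserted interval, there is no abutting interval at all, and even at an endpoint the one-sided derivative from the $A_2$ side is not given ``by construction.'' This cannot be waved away by a Lipschitz-plus-continuous-extension argument: $A_2$ has positive Lebesgue measure, so the a.e.\ derivative of $g_2$ on $A_2$ is not determined by its values on $B_2$. (Consider $g(x)=m(A\cap[0,x])$ for a fat Cantor set $A$: $g'\equiv 0$ on the open dense complement and extends continuously by $0$, yet $g'=1$ at density points of $A$.) The paper closes this gap with the Denjoy-type ratio estimate
\[
\lim_{z\downarrow y}\frac{m\bigl(g_2([y,z])\bigr)}{m([y,z])}=2,
\]
proved by decomposing $[y,z]$ into $X_1=[y,z]\cap B_2$ and $X_2=[y,z]\cap A_2$, noting $m(g_2(X_2))=2\,m(X_2)$, and squeezing $m(g_2(X_1))/m(X_1)$ between the two ratios of sums $\sum a_{2,n-1}/\sum a_{2,n}$ over inserted intervals meeting versus contained in $[y,z]$, both of which tend to $2$ because $a_{2,n-1}/a_{2,n}\to 2$. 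This measure-ratio step is exactly where the specific schedule $a_{\beta,n}=\beta^{-n}\cdot\frac{2}{n(n+1)}$ is used, and it is the part of the argument your proposal omits; adding it would complete your (a).

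A smaller inaccuracy: for $\beta=2$ the preimage set $\Lambda_2=\bigcup_i g_2^{-i}(c')$ is \emph{not} in the basin of $A_2$, since $g_2(c')$ eventually reaches the boundary fixed point $0-$, which lies outside $A_2$; the paper states that for $\beta=2$ the basin is $A_2$ alone. Your formula ``basin $=\tilde A_2\cup\tilde\Lambda_2$'' is copied from the general case in Section~3 and does not apply verbatim here. This does not affect the conclusion (the set has measure zero either way), but it is worth correcting.
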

\begin{proof}
\myproof
We divide the proof in the following steps:

{\bf Step 1:} $A_2$ is a Cantor set with positive Lebesgue measure in $I_2$

As a countable set, $C_2$ has Lebesgue measure equal to zero, which follows that $A_2$ has Lebesgue measure equal to one. $A_2$ is a compact set since $A_2$ is the complement of an open set $B_2$. Notice that $C_2=0\bigcup\cup_{i=0}^{+\infty}\frac{1}{2^i}$, then $C_2$ is dense in $I$, which follows that $A_2$ is totally disconnected. Each point in $A_2$ can be approached by the endpoints of some inserted intervals, and the endpoints of those inserted intervals belong to $A_2$, thus each point $x\in A_2$ is an accumulation point in $A_2$, namely, $A_2\subseteq A'_2$. Then $A_2$ is a perfect set since $A_2$ is closed. Therefore $A_2$ is compact, totally disconnected, perfect, which follows that $A_2$ is a Cantor set.

{\bf Step 2:} $g_2$ is a $C^1$ map.

Let $c'$ be the critical point of $g_2$. Without loss of generality, we only consider the case $y<c'$.

We first show $g_2$ is differentiable. We have defined a $C^1$ map on each inserted interval $J_{s_n s_{n-1} \cdots s_0}$ such that the derivative is equal to 2 in the boundary of $J_{s_n s_{n-1} \cdots s_0}$. To prove that $g_2$ is differentiable, it is clearly enough to prove that $g_2$ is differentiable at each point $y\in A_2$ and that its derivative at $y$ is equal to 2. We show that
\begin{equation}\label{left derivative of 2}
\ \ \ \ \ \ \ \ \ \ \ \ \ \ \ \ \ \ \ \ \ \ \ \ \ \ \lim\limits_{z\downarrow y}\frac{m(g_2([y,z]))}{m([y,z])}=2,
\end{equation}
and similarly that the corresponding left-sided limit is equal to 2. Recall that $m(A)$ denotes the Lebesgue measure of $A$.

Let $X_1=[y,z]\cap B_2$ and $X_2=[y,z]\cap A_2$. One can check that $m(g_\beta(X_2))=2 m(X_2)$. In fact, $X_2$ can be divided into two parts $X_{21}$ and $X_{22}$, where $X_{22}$ denotes the endpoints of inserted intervals that intersect $X_2$, then $m(X_{22})=0$. Notice that corresponding $X_{21}$ on dynamical system $(I,f_\beta)$ we have $m(f_\beta(X_{21}))=2 m(X_{21})$, which follows that $m(g_\beta(X_2))=2 m(X_2)$. Therefore, to prove (\ref{left derivative of 2}), we only need to show
\begin{equation}\label{derivative of 2}
\ \ \ \ \ \ \ \ \ \ \ \ \ \ \ \ \ \ \ \ \ \ \ \ \ \ \lim\limits_{z\downarrow y}\frac{m(g_2(X_1))}{m(X_1)}=2.
\end{equation}

We shall adapt the idea in  \cite{de2012one}(Page 42, Theorem 2.3) to prove (\ref{derivative of 2}). Since the derivative of $g_2$ in the boundary of  any inserted interval is equal 2, (\ref{derivative of 2}) holds whenever a right-sided neighbourhood of $y$ is completely contained in one of the inserted intervals. Therefore we may assume that each right-sided neighbourhood of $y$ intersects an infinite number of inserted intervals. $m(X_1)$ can be estimated from the sum of Lebesgue measure of the inserted intervals completely contained in $[y,z]$ plus a piece of the inserted interval that contains $z$. Let
$$J(z)=\{J_{s_n s_{n-1} \cdots s_0}|J_{s_n s_{n-1} \cdots s_0}\cap[y,z]\neq \emptyset,n\in N\},$$
$$J'(z)=\{J_{s_n s_{n-1} \cdots s_0}|J_{s_n s_{n-1} \cdots s_0}\subset [y,z],n\in N\},$$
then
$$\sum\limits_{J_{s_n s_{n-1} \cdots s_0}\in J'(z)}a_{2,n}\leq m(X_1)\leq \sum\limits_{J_{s_n s_{n-1} \cdots s_0}\in J(z)}a_{2,n}.$$
Since $J_{s_n s_{n-1} \cdots s_0}$ is contained in $[y,z]$ if and only if $J_{s_{n-1} s_{n-2} \cdots s_0}$ is contained in $g_2([y,z])$, we get
$$\sum\limits_{J_{s_n s_{n-1} \cdots s_0}\in J'(z)}a_{2,n-1}\leq m(g_2(X_1))\leq \sum\limits_{J_{s_n s_{n-1} \cdots s_0}\in J(z)}a_{2,n-1}.$$
Therefore
\begin{equation}\label{diffeomorphism of 2}
\frac{\sum\limits_{J_{s_n s_{n-1} \cdots s_0}\in J'(z)}a_{2,n-1}}{\sum\limits_{J_{s_n s_{n-1} \cdots s_0}\in J(z)}a_{2,n}}\leq \frac{m(g_2(X_1))}{m(X_1)}\leq\frac{\sum\limits_{J_{s_n s_{n-1} \cdots s_0}\in J(z)}a_{2,n-1}}{\sum\limits_{J_{s_n s_{n-1} \cdots s_0}\in J'(z)}a_{2,n}}.
\end{equation}
By construction, $J'(z)\subset J(z)$, $\sharp\{J(z)\backslash J'(z)\}\leq 1$ and $\inf\{n|J_{s_n s_{n-1} \cdots s_0}\in J(z)\}\rightarrow +\infty, \inf\{n|J_{s_n s_{n-1} \cdots s_0}\in J'(z)\}\rightarrow +\infty$ as $z\rightarrow y$. Since $a_{2,n}\rightarrow 0$ and $a_{2,n-1}/a_{2,n}\rightarrow 2$ as $n\rightarrow \infty$, we have
\[\lim\limits_{z\downarrow y}\frac{\sum\limits_{J_{s_n s_{n-1} \cdots s_0}\in J'(z)}a_{2,n-1}}{\sum\limits_{J_{s_n s_{n-1} \cdots s_0}\in J(z)}a_{2,n}}=\lim\limits_{z\downarrow y}\frac{\sum\limits_{J_{s_n s_{n-1} \cdots s_0}\in J'(z)}a_{2,n-1}}{\sum\limits_{J_{s_n s_{n-1} \cdots s_0}\in J'(z)}a_{2,n}}=\lim\limits_{n_1\rightarrow +\infty \atop n_1<n_2<\cdots}\frac{\sum\limits_{i=1}^\infty a_{2,n_i-1}}{\sum\limits_{i=1}^\infty a_{2,n_i}}\]
\[=\lim\limits_{n_1\rightarrow +\infty \atop n_1<n_2<\cdots}\frac{\sum\limits_{i=1}^\infty \frac{1}{2^{n_i-2}}\frac{1}{n_i(n_i-1)}}{\sum\limits_{i=1}^\infty \frac{1}{2^{n_i-1}}\frac{1}{n_i(n_i+1)}}=\lim\limits_{n_1\rightarrow +\infty \atop n_1<n_2<\cdots}2\frac{\sum\limits_{i=1}^\infty \frac{1}{2^{n_i}}\frac{1}{n_i^2}}{\sum\limits_{i=1}^\infty \frac{1}{2^{n_i}}\frac{1}{n_i^2}}=2.\]
Similarly, the limit of the right side of (\ref{diffeomorphism of 2}) is equal to 2. It follows that (\ref{derivative of 2}) holds.

Next, we shall show $g'_2$ is continuous. Notice that $g'_2$ is continuous on $B_2$, and $g'_2(y)=2$ as $y\in A_2$. To prove $g'_2$ is continuous at each point $y\in A_2$, we only need to show that
\begin{equation}\label{conti of deri 2}
\ \ \ \ \ \ \ \ \ \ \ \ \ \ \ \ \ \ \ \ \ \ \ \ \ \ \lim\limits_{z\downarrow y}g'_2(z)=g'_2(y)=2.
\end{equation}
Since $\lim\limits_{z\downarrow y \atop z\in A_2}g'_2(z)=2$, to prove (\ref{conti of deri 2}), it is sufficient to show
\begin{equation}\label{continu 2}
\ \ \ \ \ \ \ \ \ \ \ \ \ \ \ \ \ \ \ \ \ \ \ \ \ \ \lim\limits_{z\downarrow y \atop z\in B_2}g'_2(z)=2.
\end{equation}
(\ref{continu 2}) holds whenever a right-sided neighbourhood of $y$ is completely contained in one of the inserted intervals. Therefore we may assume that each right-sided neighbourhood of $y$ is intersects an infinite number of inserted intervals. Same as before, we have
$$\inf\{n|J_{s_n s_{n-1} \cdots s_0}\in J(z)\}\rightarrow +\infty \quad  \quad (z\rightarrow y).$$
Put $J_{s_{n} s_{n-1} \cdots s_0}=[u_1,v_1],J_{s_{n-1} s_{n-2} \cdots s_0}=[u_2,v_2]$, then
$$v_1-u_1=\frac{n-1}{2(n+1)}(v_2-u_2)$$
and
$$2 \leq h'_{s_{n} s_{n-1} \cdots s_0}\leq 2+\frac{6[(v_2-u_2)-2(v_1-u_1)]}{(v_1-u_1)^3}(v_1-\frac{u_1+v_1}{2})(\frac{u_1+v_1}{2}-u_1)=2+\frac{6}{n-1}.$$
Therefore,
$$\lim\limits_{n\rightarrow +\infty}h'_{s_{n} s_{n-1} \cdots s_0}(x)=2, \quad x\in J_{s_{n} s_{n-1} \cdots s_0}.$$
It follows that (\ref{continu 2}) holds.

{\bf Step 3:} $A_2$ is a thick hyperbolic repelling invariant Cantor set and also a wild attractor.

According to \cite{de2012one} (page 213, Lemma 2.1), a compact invariant set $K$ of a $C^1$ map $f$ is a hyperbolic set if and only if for each $x\in K$ there exists an integer $n=n(x)$ such that $|Df^n(x)|>1$. Notice that $g_2(A_2)\subset A_2$ follows from $g_2^{-1}(B_2)\subset B_2$, then $A_2$ is a hyperbolic repelling invariant Cantor set since $|g'_2(x)|=2>1$ for each point $x\in A_2$. One can check that $A_2$ is also a wild attractor. In fact, the basin of attraction $\mathbb{B}(A_2)$ is exactly $A_2$ and there exists no forward invariant closed set strictly contained in $A_2$ has this property since $g_2|A_2$ is topologically transitive, which implies that $A_2$ is a metric attractor. There exists no open interval attracted by $A_2$ since $A_2$ is nowhere dense in $I_2$, which implies that $A_2$ is not topological attractor.

{\bf Step 4:} Affine diffeomorphism.

$\tilde{g}_2$ is a $C^1$ map since $\varphi$ and $\varphi^{-1}$ are $C^1$. In fact,
\begin{eqnarray*}
(\tilde{g}_2)'(x) &=(\varphi\circ g_2 \circ \varphi^{-1})'(x)\\
 &= (\varphi)'(g_2 \circ \varphi^{-1}(x)) \cdot (g_2)'(\varphi^{-1}(x)) \cdot(\varphi^{-1})'(x)\\
 &=\frac{1}{b-a}\cdot(g_2)'(\varphi^{-1}(x))\cdot (b-a)\\
 &=(g_2)'(\varphi^{-1}(x))
\end{eqnarray*}
One can check that for each $x\in \varphi(A_2)$, we have
$|(\tilde{g}_2)'(x)|=2>1$. $\tilde{g}_2$ is also a unit interval symmetric unimodal map, which admits the same topological properties as $g_2$ since $\tilde{g}_2$ and $g_2$ are topologically conjugated. Then $\tilde{g}_2$ has a hyperbolic repelling invariant Cantor set $\varphi(A_2)$ which is also a wild attractor. Furthermore, the hyperbolic repelling invariant Cantor set of $\tilde{g}_2$ is also thick since the total length of inserted intervals is finite.

The proof of Theorem 1 is complete.
$\hfill\square$
\end{proof}

The key step to the construction of $\tilde{g}_2$ is to count the number of preimage sets of the periodic point $0$. The case $\beta=2$ is so special. It is easy to see that the number of $n$th-order preimages of the periodic point $f_\beta^2(\frac{1}{2})=0$ is $2^{n-2},n\geq 2$. We will give another example where the number of preimages set of the periodic point $c_t$ is not simple.

\subsection{The case $\beta={(\sqrt{5}+1)}/{2}$}\label{subsection2.2}

Let $f:I\rightarrow I$ be unimodal map with critical point $c_0$. For each $x\in I$, the itinerary of $x$ is defined by
$$I(x)=I_0I_1I_2 \cdots,$$
where
$$
I_i=\left \{
\begin{array}{ll}
0,& f^i(x)<c_0\\
\ast,& f^i(x)=c_0\\
1,& f^i(x)>c_0.
\end{array}
\right.
$$
The parity-lexicographical ordering $\preceq$ works as follows: Let $u\neq v$ be different itineraries and find the first position where $u, \ v$ differ. We compare in that position by the order $0<\ast<1$ if the number of 1's precedding this position is even and by the order $1<\ast<0$ otherwise. If $x, y\in I$ and $x<y$, then $I(x)\preceq I(y)$.

\begin{lemma}\label{fibonacci}
Let $f_{\beta}$ is a symmetric tent map with $\beta=(\sqrt{5}+1)/2$, we have
$$\sum\limits_{n=1}^{\infty}F_{\beta,n} a_{\beta,n}<\infty.$$
\end{lemma}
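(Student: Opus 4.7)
The plan is to reduce the lemma to establishing a bound of the form $F_{\beta,n}\le M_\beta\beta^n$. Indeed, since $a_{\beta,n}=\frac{2}{\beta^n n(n+1)}$, such a bound gives immediately
\[
\sum_{n=1}^{\infty}F_{\beta,n}a_{\beta,n}\le 2M_\beta\sum_{n=1}^{\infty}\frac{1}{n(n+1)}=2M_\beta<\infty
\]
by the telescoping identity $\frac{1}{n(n+1)}=\frac{1}{n}-\frac{1}{n+1}$. Thus the entire task is to show that $F_{\beta,n}$ grows at most geometrically with ratio $\beta$.

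To do this I would exploit the fact that for $\beta=(\sqrt5+1)/2$ the critical point is periodic. Using $\beta^2=\beta+1$ a short direct computation gives $c\mapsto\beta/2\mapsto(\beta-1)/2=1/(2\beta)\mapsto c$, so $\mathrm{orb}(c)$ has period three. The three points $1/(2\beta),\,1/2,\,\beta/2$, together with $0$ and $1$, then produce a Markov partition of $[0,1]$ into four intervals $J_{-1}=[0,\tfrac{1}{2\beta}]$, $J_0=[\tfrac{1}{2\beta},\tfrac{1}{2}]$, $J_1=[\tfrac{1}{2},\tfrac{\beta}{2}]$, $J_2=[\tfrac{\beta}{2},1]$, on each of which $f_\beta$ is affine. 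Reading off the images of endpoints yields the graph $J_{-1}\to J_{-1}\cup J_0$, $J_0\to J_1$, $J_1\to J_0\cup J_1$, $J_2\to J_{-1}$, together with the dead-end observation that $J_2\cap f_\beta([0,1])=\emptyset$ so $J_2$ never appears as an image. Every $x$ with $f_\beta^n(x)=c$ corresponds to a length-$n$ admissible path in this graph ending at a vertex whose closure contains $c$, so $F_{\beta,n}$ is majorised by the entries of $T^n$, where $T$ is the $4\times 4$ incidence matrix of the graph. I would then compute the characteristic polynomial of $T$ and check that it factors as $-\lambda(1-\lambda)(\lambda^2-\lambda-1)$, whose dominant root is exactly $\beta$; by Perron-Frobenius this yields $F_{\beta,n}\le M_\beta\beta^n$.

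The main technical subtlety will be the boundary behaviour. The critical orbit lies on the partition boundaries, so I must fix conventions for assigning boundary points to a unique partition element, and some collapses occur: for instance both branches of $f_\beta^{-1}$ applied to $\beta/2=c_1$ return $c$, so a naive path count over-counts. However, any such collapse only decreases $F_{\beta,n}$, and the first-hit exclusion $f_\beta^s(x)\neq c$ for $s<n$ is a further restriction, so the upper bound $F_{\beta,n}\le M_\beta\beta^n$ persists. Combining this with the telescoping estimate in the first paragraph completes the proof.
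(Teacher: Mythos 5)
Your proposal is correct but takes a genuinely different route from the paper's own proof of this lemma. The paper proves Lemma~3 by a concrete itinerary count: using the parity-lexicographical ordering it sets up a system of recurrences for $c_n(k_1k_2k_3)$, derives the Fibonacci recursion $F_{\beta,n}=F_{\beta,n-1}+F_{\beta,n-2}$ by induction, and then solves it explicitly as $F_{\beta,n}=k_1\bigl(\tfrac{1-\sqrt5}{2}\bigr)^n+k_2\bigl(\tfrac{1+\sqrt5}{2}\bigr)^n$, after which the sum is bounded by $4$ directly. You instead appeal to the Markov partition $\{J_{-1},J_0,J_1,J_2\}$ determined by $\{0,\,1/(2\beta),\,1/2,\,\beta/2,\,1\}$, compute the incidence matrix and its characteristic polynomial $-\lambda(1-\lambda)(\lambda^2-\lambda-1)$, and invoke Perron--Frobenius to conclude $F_{\beta,n}\le M_\beta\beta^n$. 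Your transition graph and spectral computation are right (spectrum $\{0,1,\beta,-1/\beta\}$, all simple, so the growth bound holds even though $T$ is reducible because $J_2$ is a source), and you correctly note that boundary collapses and the first-hit exclusion only lower the count. In effect you are specialising the paper's general Proposition~1 to $\beta=(\sqrt5+1)/2$, which is cleaner and more transferable; the paper's own Lemma~3 proof buys an exact asymptotic $F_{\beta,n}\sim k_2\beta^n$ and is self-contained without invoking subshift/Perron--Frobenius machinery, at the cost of a longer recurrence-manipulation. One small precision: you should say $F_{\beta,n}$ is bounded by a suitable \emph{column sum} of $T^n$ (paths of length $n$ ending at a vertex whose closure contains $c$), not by individual entries, but this does not affect the conclusion.
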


\begin{proof}
\myproof
It is easy to verify that $c$ is a $3$-period point of $f_{\beta}$, and its orbit is $orb(c)=\{c,c_1,c_2\}$, where $c_i=f^i(c), i=1,2$. In fact, $f_\beta(\frac{1}{2})=\frac{\sqrt{5}+1}{4},f_\beta^2(\frac{1}{2})=\frac{\sqrt{5}-1}{4},f_\beta^3(\frac{1}{2})=\frac{1}{2}$.
One can check that itineraries of $c,c_1,c_2$ are $I(c)=(\ast 1 0)^\infty, \ I(c_1)=(1 0\ast)^\infty, \ I(c_2)=(0\ast 1)^\infty$ respectively.

For $x\in I$, $x$ admits two preimages if $I(x) < I(c_1)$ and no preimage if $I(x)>I(c_1)$. We only need to compare the first three digits of $I(x)$ and $I(c_1)$ since $I(c_1)$ is $3$-periodic. Notice that $c_1$ admits unique preimage $c$.

Let $c_n(k_1k_2k_3),\ k_i \in \{0,1\}, i=1,2,3$ denote the number of element in $C_{\beta,n}$, whose itinerary admits first three digits $k_1k_2k_3$. Recall that
$$C_{\beta,n}=\{x|f_{\beta}^n(x)=c,\ f_{\beta}^s(x)\neq c, s<n, \  x\in I\}.$$
We observe that if the first three digits of $I(x)$ are 100, then $x$ admits no preimage. Other observations see Table \ref{table 1}.
\begin{table}[h]
\centering
\includegraphics[scale=0.8]{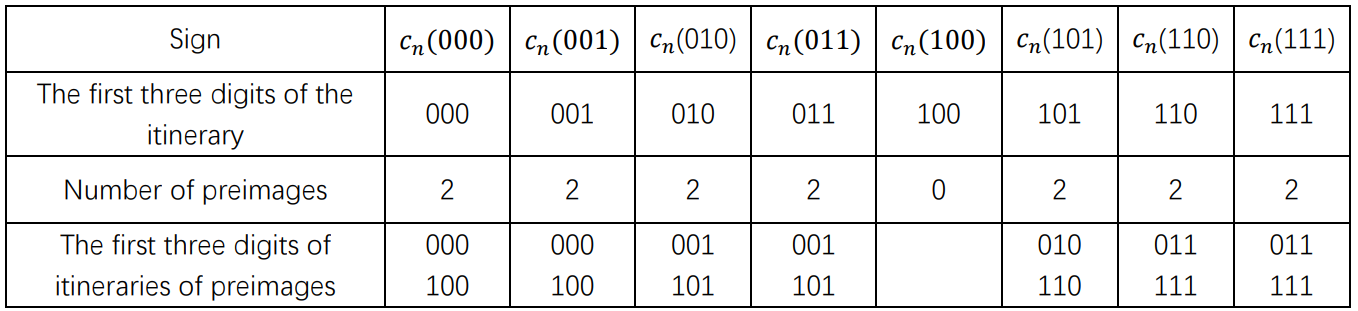}
\caption{The first three digits of itineraries of preimages}
\label{table 1}
\end{table}

Recall that $F_{\beta,n}=\sharp\{C_{\beta,n}\}.$ By direct calculation, we have $F_{\beta,1}=2, F_{\beta,2}=4, F_{\beta,3}=6$. The preimage tree of c, see Figure \ref{figure 3}.
\begin{figure}[h]
\centering
\includegraphics[scale=0.7]{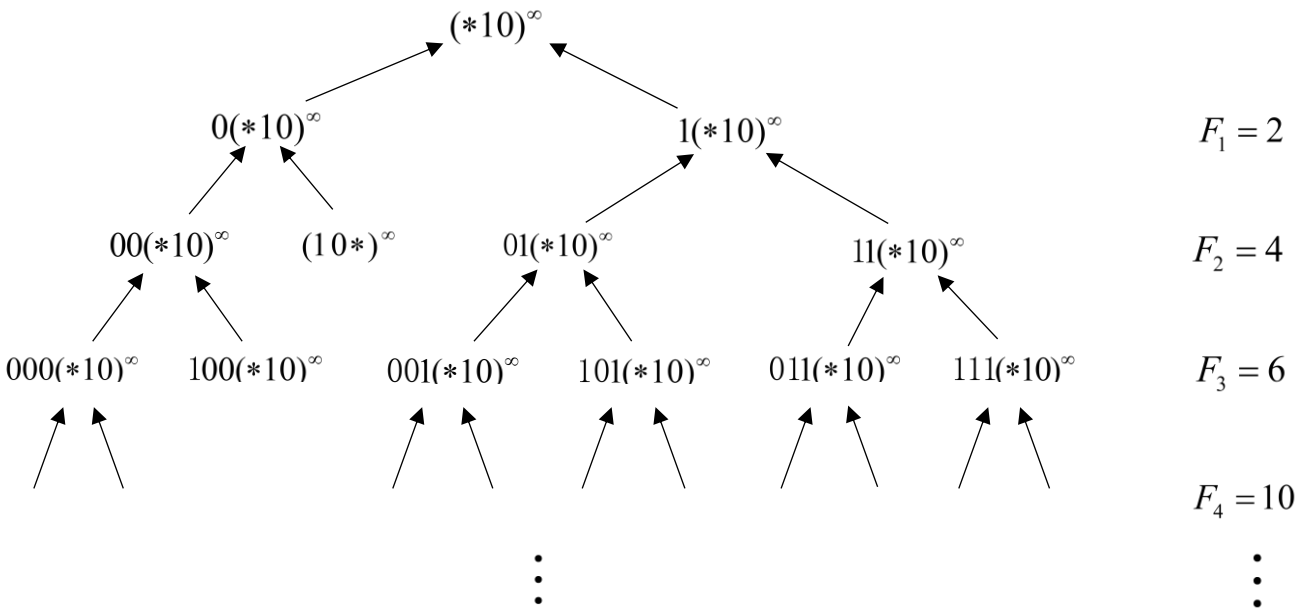}
\caption{The preimage tree of c as $\beta=(\sqrt{5}+1)/2$.}
\label{figure 3}
\end{figure}

By the table \ref{table 1}, we have
\begin{eqnarray*}
c_{n+1}(000)&=c_{n+1}(100)=c_n(000)+c_n(001),\\
c_{n+1}(001)&=c_{n+1}(101)=c_n(010)+c_n(011),\\
c_{n+1}(010)&=c_{n+1}(110)=c_n(101),\\
c_{n+1}(011)&=c_{n+1}(111)=c_n(110)+c_n(111).
\end{eqnarray*}
It follows that
\begin{eqnarray*}
F_{\beta,n} &=2(c_{n+1}(000)+c_{n+1}(001)+c_{n+1}(010)+c_{n+1}(011))\\
 & =2(c_{n+1}(100)+c_{n+1}(101)+c_{n+1}(110)+c_{n+1}(111)),\quad n\geq 3.
\end{eqnarray*}

We shall show $F_{\beta,n}=F_{\beta,n-1}+F_{\beta,n-2}, n\geq 3$ by induction. Assume $F_{\beta,n}=F_{\beta,n-1}+F_{\beta,n-2}$ holds if $3\leq n\leq k$. Since
\begin{eqnarray*}
F_{\beta,n-1}+F_{\beta,n-2} &= 2(c_{n-1}(000)+c_{n-1}(001)+c_{n-1}(010)+c_{n-1}(011))\\
&\ \ \ +2(c_{n-2}(000)+c_{n-2}(001)+c_{n-2}(010)+c_{n-2}(011))\\
&=2(c_{n}(000)+c_{n}(001))+2(c_{n-1}(000)+c_{n-1}(001))\\
&=2(c_{n}(000)+c_{n}(001))+2(c_{n}(000))\\
&=2(2c_{n}(000)+c_{n}(001)),
\end{eqnarray*}
then $F_{\beta,n}=2(c_{n}(000)+c_{n}(001)+c_{n}(010)+c_{n}(011))=2(2c_{n}(000)+c_{n}(001))$, namely, $c_{n}(000)=c_{n}(010)+c_{n}(011)=c_{n}(101), \ 3\leq n\leq k$.

If $n=k+1$, we have
\begin{eqnarray*}
F_{\beta,k+1} &=2(c_{n+1}(000)+c_{n+1}(001)+c_{n+1}(010)+c_{n+1}(011))\\ &=2(c_{n}(000)+c_{n}(001)+c_{n}(010)+c_{n}(011))\\
& \ \ \ +2(c_{n}(101)+c_{n}(110)+c_{n}(111))\\
&=F_{\beta,k}+2(c_{n-1}(000)+c_{n-1}(101)+c_{n-1}(110)+c_{n-1}(111))\\
&=F_{\beta,k}+2(c_{n-1}(100)+c_{n-1}(101)+c_{n-1}(110)+c_{n-1}(111))\\
&=F_{\beta,k}+F_{\beta,k-1}.
\end{eqnarray*}

Furthermore, we have
$$F_{\beta,n}=k_1(\frac{1-\sqrt{5}}{2})^n+k_2(\frac{1+\sqrt{5}}{2})^n, \quad n\geq 1,$$
where $k_1=\frac{5-\sqrt{5}}{5}, k_2=\frac{5+\sqrt{5}}{5}$.
Then
\begin{eqnarray*}
\sum\limits_{n=1}^{\infty}F_{\beta,n} a_{\beta,n} &=\sum\limits_{n=1}^{\infty}[k_1(\frac{1-\sqrt{5}}{2})^n+k_2(\frac{1+\sqrt{5}}{2})^n](\frac{2}{1+\sqrt{5}})^n\frac{2}{n(n+1)}\\
&=\sum\limits_{n=1}^{\infty}[k_1(-1)^n(\frac{3-\sqrt{5}}{2})^n+k_2]\frac{2}{n(n+1)}\\
&<\sum\limits_{n=1}^{\infty}\frac{2(k_1+k_2)}{n(n+1)}=4<\infty.
\end{eqnarray*}
The proof is complete.
$\hfill\square$
\end{proof}

Similar to the construction of $g_2$, one can construct a $C^1$ unimodal map $g_\beta$ on $I_\beta=A_\beta\cup B_\beta$ with $\beta={(1+\sqrt{5})}/{2}$. Recall that $B_\beta$ is the union of the intervals inserted over the unit interval $I$ and $A_\beta$ is the complement of $B_\beta$. We also denote the critical point of $g_\beta$ by $c'$. Let $(c-,c+)$ and $(c_i-,c_i+),i=1,2$ be the intervals inserted at points $c,c_1,c_2$, respectively. The graph of map $g_\beta$ restricted to some intervals is given in Figure \ref{figure 4}.

One can check that the set $B_\beta$ except for countable points eventually enters the intervals inserted at the orbit of $c$ after countable iterations of $g_\beta$, and $A_\beta$ is attracted by itself. Notice that $c'\in (c-,c+), g_\beta(c')=c_1+\in A_\beta$ and $orb(c_1+)=\{c_1+,c_2-,c-,c_1-,c_2+,c+\}$, where $c_1-$ is a $3$-period point of $g_\beta$. The preimages set $\cup_{i=0}^{+\infty}g_\beta^{-i}(c')$ is attracted by the set $\{c_1-,c_2+,c+\}$ in $A_\beta$.

\begin{figure}[h]
\centering
\includegraphics[scale=0.6]{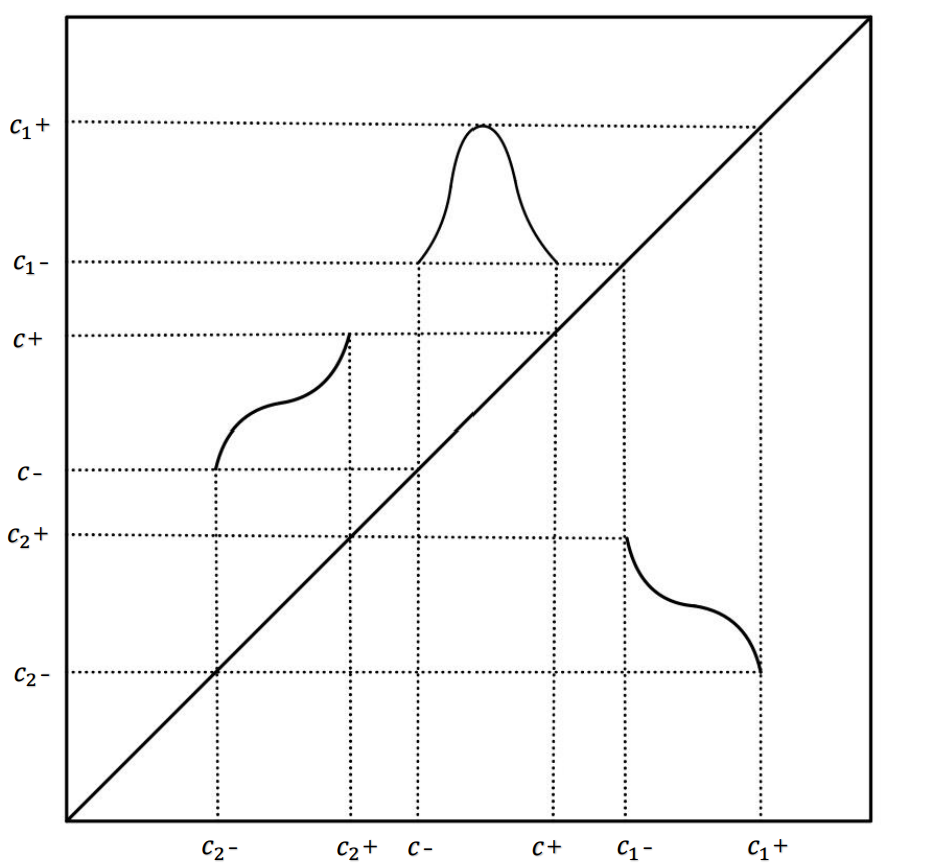}
\caption{The graph of map $g_\beta$ with $\beta={(1+\sqrt{5})}/{2}$ restricted to some intervals.}
\label{figure 4}
\end{figure}

Same as the case $\beta=2$, suppose $I_\beta=[a_\beta,b_\beta]$ and define $\varphi(x):[a_\beta,b_\beta]\rightarrow [0,1],x\mapsto \frac{x-a_\beta}{b_\beta-a_\beta}$ and $\tilde{g}_\beta:=\varphi\circ g_\beta \circ \varphi^{-1}$ as $\beta={(1+\sqrt{5})}/{2}$.
\begin{theorem}
Suppose . $\tilde{g}_\beta$ is a $C^1$ unimodal map on $[0,1]$ admitting thick hyperbolic repelling invariant Cantor set which contains a wild attractor.
\end{theorem}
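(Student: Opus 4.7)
\myproof
The plan is to mirror the four-step argument used for Theorem \ref{theorem example 1}, substituting the Fibonacci-type preimage count from Lemma \ref{fibonacci} in place of the dyadic count, and exploiting the fact that the asymptotic ratio $a_{\beta,n-1}/a_{\beta,n}$ is now the golden ratio $\beta=(1+\sqrt{5})/2$ rather than $2$. Since $\tilde{g}_\beta$ is topologically conjugate to $g_\beta$ through the affine diffeomorphism $\varphi$, Step 4 is immediate once the first three steps are carried out for $g_\beta$ on $I_\beta$. I will therefore work on $I_\beta=A_\beta\cup B_\beta$ throughout.

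For Step 1, $A_\beta$ is the complement in $I_\beta$ of the countable union of open inserted intervals, hence compact. The set $C_\beta$ of preimages of $orb(c)$ is dense in $[0,1]$ for the tent map, and every endpoint of an inserted interval belongs to $A_\beta$, so $A_\beta$ is perfect and totally disconnected, and is thus a Cantor set. Lemma \ref{fibonacci} gives $\sum_{n\ge 1}F_{\beta,n}a_{\beta,n}<\infty$, which is precisely the statement that the total length of inserted intervals is finite, so $|I_\beta|<+\infty$ and $m(A_\beta)>0$ after normalization; this makes $A_\beta$ (and hence $\varphi(A_\beta)$) thick. For Step 3, note that $g_\beta^{-1}(B_\beta)\subset B_\beta$ forces $g_\beta(A_\beta)\subset A_\beta$, and $|g_\beta'(x)|=\beta>1$ on $A_\beta$ will yield hyperbolicity via \cite{de2012one}(p.213, Lemma 2.1). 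Transitivity of $g_\beta|_{A_\beta}$ (inherited from the tent map core) shows $A_\beta$ itself contains no smaller forward-invariant closed subset of full measure in $A_\beta$, so a metric attractor $A_\beta^\ast\subset A_\beta$ exists; that $A_\beta^\ast$ is wild follows because its basin is nowhere dense in $I_\beta$.

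The main obstacle is Step 2, the verification that $g_\beta$ is $C^1$ at each point $y\in A_\beta$. The argument from Theorem \ref{theorem example 1} adapts verbatim once one checks the two asymptotic ingredients. For differentiability, set $X_1=[y,z]\cap B_\beta$ and observe that both $m(X_1)$ and $m(g_\beta(X_1))$ are squeezed between sums $\sum_{J\in J'(z)}a_{\beta,n}$ and $\sum_{J\in J(z)}a_{\beta,n}$ (respectively with index shifted to $n-1$ for the image), where $J(z),J'(z)$ differ by at most one element and $\inf\{n:J_{s_n\cdots s_0}\in J(z)\}\to\infty$ as $z\downarrow y$. Because $a_{\beta,n-1}/a_{\beta,n}=\beta(n+1)/(n-1)\to\beta$, the two-sided Stolz-type estimate forces $m(g_\beta(X_1))/m(X_1)\to\beta$, and combining with $m(g_\beta(X_2))=\beta\,m(X_2)$ on $X_2=[y,z]\cap A_\beta$ gives $g_\beta'(y)=\beta$ (respectively $-\beta$ on the right of $c'$). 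For continuity of $g_\beta'$, one bounds on each inserted interval $J_{s_n\cdots s_0}=[u_1,v_1]$ with image $J_{s_{n-1}\cdots s_0}=[u_2,v_2]$
\[
\beta\le h'_{s_n\cdots s_0}(x)\le \beta+\frac{3}{2}\Bigl[\frac{v_2-u_2}{v_1-u_1}-\beta\Bigr]=\beta+\frac{3\beta}{n-1},
\]
which tends to $\beta$ uniformly as $n\to\infty$; since any sequence $z\downarrow y$ with $z\in B_\beta$ eventually lies in intervals of arbitrarily high generation $n$, we obtain $\lim_{z\downarrow y}g_\beta'(z)=\beta$. The same bound applies to $r'_{s_n\cdots s_0}$ on the right branches.

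Finally, Step 4 observes that $\tilde g_\beta=\varphi\circ g_\beta\circ\varphi^{-1}$ with $\varphi$ an affine diffeomorphism preserves $C^1$ regularity and produces $|\tilde g_\beta'|\equiv\beta$ on $\varphi(A_\beta)$; topological conjugacy transports the Cantor set structure, hyperbolicity, positive measure, the wild metric attractor $\varphi(A_\beta^\ast)$, and unimodality to $[0,1]$. The only subtlety I anticipate is bookkeeping the inserted intervals $(c-,c+),(c_i-,c_i+)$ at the $3$-cycle $\{c,c_1,c_2\}$ so that the map $f$ on $\bar J_0$ is $C^1$ unimodal with $f'(p_0)=\beta$, $f'(q_0)=-\beta$, and $f(p_0)=f(q_0)=p_1$, but Lemma \ref{left} already supplies the required building blocks on the other two intervals of the cycle.
$\hfill\square$
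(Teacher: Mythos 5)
Your proposal follows the same four-step template as the paper uses for Theorem~1 and (by reference) for Theorem~2, and Steps~1, 2, and 4 are sound. In fact your continuity bound $\beta\le h'_{s_n\cdots s_0}\le\beta+\frac{3\beta}{n-1}$ is the correct consequence of Lemma~1 together with $\frac{v_2-u_2}{v_1-u_1}=\frac{\beta(n+1)}{n-1}$; the paper's Section~3 displays $\beta+\frac{6}{n-1}$, which equals yours only when $\beta=2$, so you have quietly corrected a small arithmetical slip that does not affect the conclusion.

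Step~3 is the one place where your argument is off, and the imprecision matters precisely for $\beta=(1+\sqrt5)/2$ even though it would be harmless for $\beta=2$. When $\beta=2$ the core is $P_2=[0,1]$, so $A_2$ itself is the wild attractor and transitivity on all of $A_2$ holds. For $\beta=(1+\sqrt5)/2$ the core $P_\beta=[c_2,c_1]$ is a proper subinterval of $[0,1]$, so $g_\beta$ restricted to all of $A_\beta$ is \emph{not} topologically transitive, and your sentence ``Transitivity of $g_\beta|_{A_\beta}$\ldots{} shows $A_\beta$ itself contains no smaller forward-invariant closed subset of full measure'' is false as stated: $A_\beta\cap\tilde P_\beta$ is exactly such a proper forward-invariant closed subset (here $\tilde P_\beta$ denotes the interval in $I_\beta$ corresponding to $P_\beta$). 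What the paper actually does, and what you need, is to \emph{define} $A_\beta^\ast:=A_\beta\cap\tilde P_\beta$; then $g_\beta(\tilde P_\beta)=\tilde P_\beta$ gives forward invariance of $A_\beta^\ast$, transitivity of $g_\beta|_{A_\beta^\ast}$ (inherited from the non-renormalizable tent map on its core) excludes smaller invariant sets with positive-measure basin inside $A_\beta^\ast$, and $\mathbb{B}(A_\beta^\ast)=A_\beta\cup\Lambda_\beta$ is nowhere dense yet has full measure in $I_\beta$, whence $A_\beta^\ast$ is a wild metric attractor. Without this explicit identification of $A_\beta^\ast$ through the core, Step~3 does not go through.
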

\begin{proof}
\myproof
Same as the unimodal map $g_2$, one can check that $g_\beta$ is a $C^1$ unimodal map admitting hyperbolic repelling invariant Cantor set $A_\beta$ since $g_\beta(A_\beta)\subset A_\beta$ and $|g'_\beta(x)|=\beta>1, x\in A_\beta$ . Notice that $f_\beta$ is not renormalizable as $\beta={(1+\sqrt{5})}/{2}$, which follows that $g_\beta$ is not renormalizable . Let $\tilde{P}_\beta$ be the core on the system $(I_\beta,g_\beta)$ corresponding to the core $P_\beta$ of the system $(I,f_\beta)$. Define $A_\beta^\ast:=A_\beta\cap \tilde{P}_\beta$, then $g_\beta(A_\beta^\ast)\subset A_\beta^\ast$ since $g_\beta(\tilde{P}_\beta)=\tilde{P}_\beta$ and $g_\beta(A_\beta)\subset A_\beta$. Furthermore, one can check that $A_\beta^\ast$ is a wild attractor of $g_\beta$ since $A_\beta^\ast$ is not decomposable and there exists no open interval attracted by $A_\beta^\ast$. Similar to the proof of Theorem \ref{theorem example 1}, $\tilde{g}_\beta$ is a unit interval $C^1$ unimodal map admitting hyperbolic repelling invariant Cantor set which contains a wild attractor.

Notice that the length of inserted intervals differs by a finite number from $\sum\limits_{n=1}^{\infty}F_{\beta,n} a_{\beta,n}.$ By Lemma \ref{fibonacci},
the total length of inserted intervals is finite, which follows that the hyperbolic repelling invariant Cantor set of $\tilde{g}_\beta$ is thick.
$\hfill\square$
\end{proof}

\section{The general case $\beta\in D$}
\renewcommand{\thetheorem}{\Alph{theorem}}
\noindent

In this section, by operating Denjoy like surgery namely inserting intervals at the preimages $\cup_{i=0}^{+\infty}f_\beta^{-i}(c_t)$ of $c_t$, we shall construct a map $g_{\beta}$ for each $\beta\in D$, where $D\subset(1,2]$ such that $f_\beta$ is a symmetric tent map with finite critical orbit. One of the keys of the surgery is to control the total length of the inserted intervals so that it is finite, in fact, which could be ensured by Perron-Frobenius theory as $\beta\in D$.

{\bf Perron-Frobenius Theorem}\cite{walters2000introduction}. Let $A$ be a non-negative square  matrix.
\begin{enumerate}[(1)]
\item There is a non-negative eigenvalue $\lambda$ such that no eigenvalue of $A$ has absolute value greater than $\lambda$.
\item Corresponding to the eigenvalue $\lambda$ there is a non-negative left eigenvector $w$ and a non-negative right eigenvector $v$ which are also called left Perron vector and right Perron vector, i.e., there exist non-negative column vectors $v$ and $w$ such that $Av=\lambda v$ and $w^TA=\lambda w^T$.
\item If $A$ is irreducible then $\lambda$ is a simple eigenvalue and the corresponding eigenvectors are strictly positive.
\end{enumerate}

Recall that a square real matrix $A$ is nonnegative if each entry is nonnegative, irreducible if for each $i,j$ corresponding to an entry in the matrix, there exists a positive integer $k$ such that $(A^k)_{ij}>0$. Choose $w$ and $v$ such that $v^Tw=1$, where $v$ and $w$ is the right and left Perron vectors of a nonnegative matrix $A$ with spectral radius $\lambda_A$, then we have
$$\lim\limits_{n\rightarrow +\infty}(\frac{1}{\lambda_A}A)^n=vw^T.$$

\begin{proposition}\label{summable} Let $D$ be the set of $\beta \in (1, 2]$ such that $f_\beta$ with finite critical orbit, then $D$ is dense in $(1,2]$, and for each $\beta \in D$, we have
$$\sum\limits_{n=1}^{\infty}F_{\beta,n} a_{\beta,n}<+\infty.$$
\end{proposition}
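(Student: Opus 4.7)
The plan is to establish density first and then the summability via a Markov partition argument combined with Perron--Frobenius.

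For the density of $D$ in $(1,2]$, I would invoke the facts already pointed out in Remark 1(3): $f_\beta$ has finite critical orbit if and only if the critical point is (pre)periodic, and by \cite{brucks2004topics} (Lemma 10.3.4) the set of such $\beta$ is dense in $(\sqrt{2},2]$. For $\beta\in(1,\sqrt{2}]$, period-doubling renormalization gives a bijective correspondence between the (pre)periodic parameters on the restrictive intervals and the (pre)periodic parameters in the non-renormalizable range, so density transfers to $(1,\sqrt{2}]$. Together this yields density on the whole interval $(1,2]$.

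The heart of the proposition is the summability estimate, which I would prove by showing $F_{\beta,n}\le M_\beta\beta^n$ for some constant $M_\beta$. First, because $\mathrm{orb}(c)=\{c,c_1,\dots,c_{t+m-1}\}$ is finite and forward invariant under $f_\beta$, the points of $\mathrm{orb}(c)\cup\{0,1\}$ subdivide $I$ into finitely many closed subintervals $I_1,\dots,I_N$. Since $f_\beta$ is piecewise linear with constant slope $\beta$ on each monotone branch and maps $\mathrm{orb}(c)$ into itself, $\{I_1,\dots,I_N\}$ is a Markov partition: whenever $f_\beta(I_i)$ meets $\mathrm{int}(I_j)$ it contains $I_j$ entirely. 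Let $A$ be the $N\times N$ transition matrix with $A_{ij}=1$ if $f_\beta(I_i)\supset I_j$ and $A_{ij}=0$ otherwise. By induction, the number of monotone branches of $f_\beta^n$ sending a subinterval of $I_i$ onto $I_j$ equals $(A^n)_{ij}$; in particular, the number of $n$-preimages of any point of $I_j$ lying in $I_i$ is at most $(A^n)_{ij}$ (with a harmless correction for endpoints).

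The key spectral step is to identify the growth rate of $\|A^n\|$. The topological entropy of the tent map is $h_{\mathrm{top}}(f_\beta)=\log\beta$, and by the classical Parry/Milnor--Thurston correspondence this equals $\log\rho(A)$, the log of the Perron--Frobenius eigenvalue of $A$. I would handle irreducibility either by restricting to the core $P_\beta$ (where $f_\beta$ is topologically transitive in the non-renormalizable case and hence the induced block of $A$ is irreducible) or by passing to a block-triangular decomposition whose dominant diagonal block still has spectral radius $\beta$. Applying the Perron--Frobenius theorem stated in the paper gives $\lim_{n\to\infty}(\beta^{-1}A)^n=vw^T$, hence a constant $M_\beta>0$ with $\sum_{i,j}(A^n)_{ij}\le M_\beta\beta^n$. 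Since $c_t\in\mathrm{orb}(c)$ belongs to some $I_{j_0}$, summing over $i$ bounds $F_{\beta,n}$ by $M_\beta\beta^n$.

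Finally, using the closed form $a_{\beta,n}=\tfrac{1}{\beta^n}\cdot\tfrac{2}{n(n+1)}$,
\[
\sum_{n=1}^{\infty}F_{\beta,n}\,a_{\beta,n}\;\le\;M_\beta\sum_{n=1}^{\infty}\beta^n\cdot\frac{1}{\beta^n}\cdot\frac{2}{n(n+1)}\;=\;2M_\beta\sum_{n=1}^{\infty}\frac{1}{n(n+1)}\;=\;2M_\beta<+\infty,
\]
by the telescoping sum, completing the proof. The main obstacle I anticipate is the spectral step: checking carefully that the transition matrix (or its relevant irreducible component on the core) has spectral radius exactly $\beta$ uniformly across the renormalizable and non-renormalizable regimes of $D$, so that the constant $M_\beta$ is legitimately finite for every $\beta\in D$.
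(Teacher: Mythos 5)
Your proposal takes essentially the same route as the paper: encode $f_\beta$ by a finite Markov/subshift-of-finite-type transition matrix whose spectral radius is $\beta$ (via $h_{\mathrm{top}}(f_\beta)=\log\beta$), apply the Perron--Frobenius theorem to obtain $F_{\beta,n}\le M_\beta\beta^n$, and then use $a_{\beta,n}=\beta^{-n}\cdot\frac{2}{n(n+1)}$ to reduce the sum to the convergent series $\sum\frac{1}{n(n+1)}$. Your proof is a bit more explicit than the paper's about the Markov partition construction and about the irreducibility/primitivity issue underlying the limit $(\beta^{-1}A)^n\to vw^T$ (a point the paper glosses over), but the key lemma, decomposition, and estimate are the same.
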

\begin{proof}
\myproof
By (3) of Remark 1, $D$ is dense in $(1,2]$. If the orbit of the critical point $c$ is finite, then there exists a subshift of finite type  corresponding to it. We refer to the matrix $B$ as the transition matrix for the subshift of finite type. Suppose $B$ is a $s\times s$ square matrix, then there exists $s$ blocks which represent $s$ states. According to \cite{lind2021introduction}, the spectral radius of $B$ is precisely equal to $\beta$ since the topological entropy of $f_\beta$ is equal to $\log \beta$. Note that $B^n$ counts the numbers of paths of length $n$ in the transition graph $G_B$ corresponding to $B$, i.e., between vertices $i$ and $j$ there are exactly $B^n_{i,j}$ paths of length $n$ in $G_B$. Suppose the periodic point $c_t$ belongs to the $m$-th block, we can conclude that $\sum\limits_{i=1}^{s}B^n_{i,m}$ is precisely the number of the $n$-th order preimages of $c_t$. In fact, $B^n_{i,m}$ counts the number of the $n$-th order preimages of $c_t$ that belongs to the $i$-th block. So we have $F_{\beta,n}\leq\sum\limits_{i=1}^{s}B^n_{i,m}$. Since $\lim\limits_{n\rightarrow +\infty}\frac{B^n}{\beta^n}=vw^T$, where $w$ and $v$ are the left Perron vector and the right Perron vector of $B$, respectively, then there exist positive number $M$ and $N$ such that $\sum\limits_{i=1}^{s}B^n_{i,m}\leq M\beta^n$ as $n\geq N$ which implies that $\sum\limits_{n=1}^{\infty}F_{\beta,n} a_{\beta,n}<+\infty$.
$\hfill\square$
\end{proof}

We sketch the proof of Theorem A.

We will first construct a $C^1$ unimodal map $g_\beta:I_\beta\rightarrow I_\beta$ for each $\beta\in D$ by inserting suitable intervals at the preimages of the critical periodic orbit of $f_\beta$ and defining suitable mappings on them. The idea is based on the construction of Denjoy. The proof of smoothness of $g_\beta$ is similar to the proof of Theorem 2.3 in \cite{de2012one}. We present that $g_\beta$ admits a thick hyperbolic repelling invariant Cantor set $A_\beta$ which contains a wild attractor $A_\beta^\ast$, and divide the proof into two cases: $\beta\in(\sqrt{2}, 2]\cap D$ and $\beta\in(1,\sqrt{2}]\cap D$.

In the first case, $f_\beta$ is not renormalizable which implies $f_\beta$ is topologically transitive in the core $P_\beta$. It is easy to see that $A_\beta^\ast=A_\beta \cap \tilde{P}_\beta$, where $\tilde{P}_\beta$ is the core of $g_\beta$. In the second case, $f_\beta$ can be renormalized finite times via period-doubling renormalization. If $\sqrt{2}< \beta^m \leq 2$ for some $m:=2^k, k\geq 1$, then $f_\beta$ is $k$ times renormalizable, which implies that $g_\beta$ is $k$ times renormalizable and there exists a restrictive interval $\tilde{R}_\beta^k$ such that $g_\beta^{2^k}\mid \tilde{R}_\beta^k$ is again a unimodal map. Denote the core of $g_\beta^{2^k}\mid \tilde{R}_\beta^k$ by $\tilde{P}_\beta^k$, then one can check that $A_\beta^\ast=A_\beta\cap (\cap_{i=0}^{2k-1} g_\beta^i(\tilde{P}_\beta^k))$.

Next we consider an affine diffeomorphism. Suppose $I_\beta=[a_\beta,b_\beta]$. Let $\varphi(x):[a_\beta,b_\beta]\rightarrow [0,1],x\mapsto \frac{x-a_\beta}{b_\beta-a_\beta}$ and $\tilde{g}_\beta=\varphi\circ g_\beta \circ \varphi^{-1}$, then $\tilde{g}_\beta$ is a unit interval $C^1$ unimodal map admitting a thick hyperbolic repelling invariant Cantor set which contains a wild attractor.

\vspace{0.5cm}
\noindent
{\bf{The Proof of Theorem A.}}
\begin{proof}
\myproof
We divide our proof in the following steps:

{\bf Step 1:} Construct a unimodal $g_\beta$ by operating Denjoy like surgery on tent map $f_\beta$ for each $\beta\in D$.

Notice that $|J_{s_{n-1} s_{n-2} \cdots s_0}| \geq \beta|J_{s_n s_{n-1} \cdots s_0}|$ if the point $x\notin orb(c)$ labeled by the sequence $s(x)=s_{n-1} s_{n-2} \cdots s_0$. Similar to the construction of map $g_2$, By Lemma \ref{diffeomorphism of left}, one can define a $C^1$ diffeomorphism $h_{s_n s_{n-1} \cdots s_0}: L(\bar{J}_{s_n s_{n-1} \cdots s_0})\rightarrow \bar{J}_{s_{n-1} s_{n-2} \cdots s_0}$  whose derivative of the endpoints is equal to $\beta$, and a $C^1$ diffeomorphism $r_{s_n s_{n-1} \cdots s_0}: R(\bar{J}_{s_n s_{n-1} \cdots s_0})\rightarrow \bar{J}_{s_{n-1} s_{n-2} \cdots s_0}$ whose derivative of the endpoints is equal to $-\beta$.

Let $J_0,J_1,\cdots, J_{t+m-1}$ be the intervals inserted at the orbit $orb(c)$ respectively, where $c_t$ is $m$-periodic. Denote the interval $J_0$ by $(p_0,q_0)$ and the interval $J_1$ by $(p_1,q_1)$. Notice that $q_0-p_0=q_1-p_1=1$, then one can define a map $f(x):[p_0,q_0]\rightarrow [p_1,q_1]$ as
$$
f(x)=\left \{
\begin{array}{ll}
f_1(x),& x\in [p_0,p_0+\frac{1}{2}]\\
f_2(x),& x\in (p_0+\frac{1}{2},q_0],
\end{array}
\right.
$$
where
$$f_1(x)=(-16+4\beta)(x-p_0)^3+(-4\beta+12)(x-p_0)^2+\beta(x-p_0)+p_1,$$
$$f_2(x)=(16-4\beta)(x-p_0-\frac{1}{2})^3+(2\beta-12)(x-p_0-\frac{1}{2})^2+q_1.$$
One can check that $f(x)$ is a $C^1$ onto map with a critical point $c'$ and satisfies the following conditions:
$$f(p_0)=f(q_0)=p_1, \ f'(p_0)=\beta, \ f'(q_0)=-\beta.$$

Notice $f_\beta(c_{t+m-1})=c_t$, then we treat $J_{t+m}$ as $J_t$. By Lemma \ref{left}, one can define a $C^1$ diffeomorphism $g_i:\bar{J}_i\rightarrow \bar{J}_{i+1},1\leq i \leq t+m-1$ if $c_i<c$, whose derivative of the endpoints is equal to $\beta$, and a $C^1$ diffeomorphism $w_i:\bar{J}_i\rightarrow \bar{J}_{i+1},1\leq i \leq t+m-1$ if $c_i>c$, whose derivative of the endpoints is equal to $-\beta$. Define a map $g_\beta$ by
$$
g_\beta=\left \{
\begin{array}{ll}
h_{s_n s_{n-1} \cdots s_0}(x),& x\in L(\bar{J}_{s_n s_{n-1} \cdots s_0})\\
r_{s_n s_{n-1} \cdots s_0}(x),& x\in R(\bar{J}_{s_n s_{n-1} \cdots s_0})\\
f(x),& x\in \bar{J}_0\\
g_i(x),& x\in \bar{J}_i,c_i<c \\
w_i(x),& x\in \bar{J}_i,c_i>c,
\end{array}
\right.
$$
then we extend the map $g_\beta$ to the interval $I_\beta$, and the map on the interval $I_\beta$ is still denoted by $g_\beta$.

{\bf Step 2:} $g_\beta$ is a $C^1$ map.

We show $g_\beta$ is differentiable.

Let $c'$ be the critical point of $g_\beta$. $g_\beta$ is differentiable at $y\in I_\beta\backslash A_\beta$ by the definition of $g_\beta$. For $y\in A_\beta$, without loss of generality, we only consider the case $y<c'$. Notice that $g_\beta$ is monotonically increasing and decreasing on the left and the right of the critical point $c'$ respectively. To show that $g_\beta$ is differentiable, we only need to show that
$$\lim\limits_{z\downarrow y}\frac{m(g_\beta([y,z]))}{m([y,z])}$$
exists, where $m(A)$ denotes the Lebesgue measure of $A$. The proof of the left derivative is similar.

Next, we show that
\begin{equation}\label{left derivative}
\ \ \ \ \ \ \ \ \ \ \ \ \ \ \ \ \ \ \ \ \ \ \ \ \ \ \lim\limits_{z\downarrow y}\frac{m(g_\beta([y,z]))}{m([y,z])}=\beta.
\end{equation}
Let $X_1=[y,z]\cap B_\beta$ and $X_2=[y,z]\cap A_\beta$. Notice that $m(g_\beta(X_2))=\beta m(X_2)$. Therefore, to show (\ref{left derivative}), we only need to show
\begin{equation}\label{derivative}
\ \ \ \ \ \ \ \ \ \ \ \ \ \ \ \ \ \ \ \ \ \ \ \ \ \ \lim\limits_{z\downarrow y}\frac{m(g_\beta(X_1))}{m(X_1)}=\beta.
\end{equation}

We shall adapt the idea in  \cite{de2012one}(Page 42, Theorem 2.3) to prove (\ref{derivative}). If $y$ is the left endpoint of an inserted interval, then (\ref{derivative}) holds by the definition of $g_\beta$. If $y$ is not the endpoint of any inserted interval, then one can check that the right neighbor $[y,z]$ of $y$ intersects with infinite number of inserted intervals $J_{s_n s_{n-1} \cdots s_0}$. In this case $m(X_1)$ could be estimated from the sum of the measure of the inserted intervals completely contained in $[y,z]$ plus a piece of the inserted interval that contains $z$. Let
$$J(z)=\{J_{s_n s_{n-1} \cdots s_0}|J_{s_n s_{n-1} \cdots s_0}\cap[y,z]\neq \emptyset,n\in N\},$$
$$J'(z)=\{J_{s_n s_{n-1} \cdots s_0}|J_{s_n s_{n-1} \cdots s_0}\subset [y,z],n\in N\},$$
then
$$\sum\limits_{J_{s_n s_{n-1} \cdots s_0}\in J'(z)}a_{\beta,n}\leq m(X_1)\leq \sum\limits_{J_{s_n s_{n-1} \cdots s_0}\in J(z)}a_{\beta,n}.$$
Since $J_{s_n s_{n-1} \cdots s_0}$ is contained in $[y,z]$ if and only if $J_{s_{n-1} s_{n-2} \cdots s_0}$ is contained in $g_\beta([y,z])$, thus
$$\sum\limits_{J_{s_n s_{n-1} \cdots s_0}\in J'(z)}a_{\beta,n-1}\leq m(g_\beta(X_1))\leq \sum\limits_{J_{s_n s_{n-1} \cdots s_0}\in J(z)}a_{\beta,n-1}.$$
Therefore
\begin{equation}\label{diffeomorphism of one}
\frac{\sum\limits_{J_{s_n s_{n-1} \cdots s_0}\in J'(z)}a_{\beta,n-1}}{\sum\limits_{J_{s_n s_{n-1} \cdots s_0}\in J(z)}a_{\beta,n}}\leq \frac{m(g_\beta(X_1))}{m(X_1)}\leq\frac{\sum\limits_{J_{s_n s_{n-1} \cdots s_0}\in J(z)}a_{\beta,n-1}}{\sum\limits_{J_{s_n s_{n-1} \cdots s_0}\in J'(z)}a_{\beta,n}}.
\end{equation}
By the construction, $J'(z)\subset J(z)$, $\sharp\{J(z)\backslash J'(z)\}\leq 1$, and
$$\inf\{n|J_{s_n s_{n-1} \cdots s_0}\in J(z)\}\rightarrow +\infty,\quad \inf\{n|J_{s_n s_{n-1} \cdots s_0}\in J'(z)\}\rightarrow +\infty \quad  \quad (z\rightarrow y).$$
Since
$$\lim\limits_{n\rightarrow +\infty}a_{\beta,n}=\lim\limits_{n\rightarrow +\infty}\frac{1}{\beta^n}\frac{2}{n(n+1)}=0,$$
$$\lim\limits_{n\rightarrow +\infty}a_{\beta,n-1}/a_{\beta,n}=\lim\limits_{n\rightarrow +\infty}\frac{1}{\lambda_{\beta,n}}=\lim\limits_{n\rightarrow +\infty}\frac{\beta n+\beta}{n-1}=\beta,$$
then the outer terms in (\ref{diffeomorphism of one}) tend to $\beta$ as $z\downarrow y$. In fact,
\[\lim\limits_{z\downarrow y}\frac{\sum\limits_{J_{s_n s_{n-1} \cdots s_0}\in J'(z)}a_{\beta,n-1}}{\sum\limits_{J_{s_n s_{n-1} \cdots s_0}\in J(z)}a_{\beta,n}}=\lim\limits_{z\downarrow y}\frac{\sum\limits_{J_{s_n s_{n-1} \cdots s_0}\in J'(z)}a_{\beta,n-1}}{\sum\limits_{J_{s_n s_{n-1} \cdots s_0}\in J'(z)}a_{\beta,n}}=\lim\limits_{n_1\rightarrow +\infty \atop n_1<n_2<\cdots}\frac{\sum\limits_{i=1}^\infty a_{\beta,n_i-1}}{\sum\limits_{i=1}^\infty a_{\beta,n_i}}\]
\[=\lim\limits_{n_1\rightarrow +\infty \atop n_1<n_2<\cdots}\frac{\sum\limits_{i=1}^\infty \frac{1}{\beta^{n_i-1}}\frac{2}{n_i(n_i-1)}}{\sum\limits_{i=1}^\infty \frac{1}{\beta^{n_i}}\frac{2}{n_i(n_i+1)}}=\lim\limits_{n_1\rightarrow +\infty \atop n_1<n_2<\cdots}\beta\frac{\sum\limits_{i=1}^\infty \frac{1}{\beta^{n_i}}\frac{1}{n_i^2}}{\sum\limits_{i=1}^\infty \frac{1}{\beta^{n_i}}\frac{1}{n_i^2}}=\beta.\]
Similarly, the limit of the right side in (\ref{diffeomorphism of one}) is equal to $\beta$.

We show $g'_\beta$ is continuous.

By the definition of $g_\beta$, $g'_\beta$ is continuous on $B_\beta$. To prove that $g'_\beta$ is continuous on the interval $I_\beta$,  we only need to prove $g'_\beta$ is continuous on the Cantor set $A_\beta$. For $y\in A_\beta$, without loss of generality, we only consider continuity of left derivative for $y\in L(A_\beta)$. Notice that $g'_\beta(y)=\beta$ as $y\in L(A_\beta)$. Then we only need to show that
\begin{equation}\label{conti of deri}
\ \ \ \ \ \ \ \ \ \ \ \ \ \ \ \ \ \ \ \ \ \ \ \ \ \ \lim\limits_{z\downarrow y}g'_\beta(z)=g'_\beta(y)=\beta.
\end{equation}
According to the proof of differentiability of $g_\beta$, one can check that
$$\lim\limits_{z\downarrow y \atop z\in A_\beta}g'_\beta(z)=\beta.$$
To prove (\ref{conti of deri}), it is necessary to prove that
\begin{equation}\label{continu}
\ \ \ \ \ \ \ \ \ \ \ \ \ \ \ \ \ \ \ \ \ \ \ \ \ \ \lim\limits_{z\downarrow y \atop z\in B_\beta}g'_\beta(z)=\beta.
\end{equation}
If $y$ is the left endpoint of a inserted interval, then (\ref{continu}) holds by the definition of $g_\beta$. If $y$ is not the endpoint of any inserted interval, the same as before, we have
$$n_1:=\inf\{n|J_{s_n s_{n-1} \cdots s_0}\in J(z)\}\rightarrow +\infty \quad  \quad (z\rightarrow y).$$
Put $J_{s_{n_1} s_{n_1-1} \cdots s_0}=[u_1,v_1],J_{s_{n_1-1} s_{n_2-2} \cdots s_0}=[u_2,v_2]$. By the definition of $h_{s_{n_1} s_{n_1-1} \cdots s_0}$, we have
$$h'_{s_{n_1} s_{n_1-1} \cdots s_0}(x)=\beta+\frac{6[(v_2-u_2)-\beta(v_1-u_1)]}{(v_1-u_1)^3}(v_1-x)(x-u_1).$$
Notice that
$$v_1-u_1=\frac{n_1-1}{\beta(n_1+1)}(v_2-u_2).$$
Then
$$\beta \leq h'_{s_{n_1} s_{n_1-1} \cdots s_0}\leq \beta+\frac{6[(v_2-u_2)-\beta(v_1-u_1)]}{(v_1-u_1)^3}(v_1-\frac{u_1+v_1}{2})(\frac{u_1+v_1}{2}-u_1)=\beta+\frac{6}{n_1-1}.$$
Therefore,
$$\lim\limits_{n_1\rightarrow +\infty}h'_{s_{n_1} s_{n_1-1} \cdots s_0}(x)=\beta, \quad x\in J_{s_{n_1} s_{n_1-1} \cdots s_0}.$$
It follows that (\ref{continu}) holds.

{\bf Step 3:} $g_\beta$ admits a thick hyperbolic repelling invariant Cantor set which contains a wild attractor.

The Lebesgue measure of $A_\beta$ is equal to 1 since $C_\beta$ is a countable set on the unit interval $I$. One can check that $A_\beta$ is a Cantor set since $A_\beta$ is the complement of an open set $B_\beta$ and $C_\beta$ is dense on the unit interval $I$ by  \cite{cui2010alpha}. Notice that $|g'_\beta(x)|=\beta>1$ for each point $x\in A_\beta$ and $g_\beta(A_\beta)\subset A_\beta$ following from $g_\beta^{-1}(B_\beta)\subset B_\beta$, then $A_\beta$ is also a hyperbolic repelling invariant set. Furthermore, $A_\beta$ contains a wild attractor. We divide the proof into two cases: $\beta\in(\sqrt{2}, 2]\cap D$ and $\beta\in(1,\sqrt{2}]\cap D$.

{\bf Case A:} $\beta\in(\sqrt{2}, 2]\cap D$.

In the case $\beta\in(\sqrt{2}, 2]\cap D$, $f_\beta$ is not renormalizable, which implies $f_\beta$ is topologically transitive in the core $P_\beta$ of the dynamical system $(I,f_\beta)$, therefore one can check that $P_\beta$ is both topological and metric attractor on $(I,f_\beta)$. After our surgery, this attractor was broken down. Notice that $B_\beta$ except for countable set $\Lambda_\beta$ eventually enters the periodic intervals inserted at the orbit $orb(c_t)$ through countable iterations of $g_\beta$. In fact, one can check that $\Lambda_\beta=\cup_{i=0}^{+\infty}g_\beta^{-i}(c')$, where $c'$ is the critical point of $g_\beta$.
Let $\tilde{P}_\beta$ be the core on the system $(I_\beta,g_\beta)$, corresponding to the core $P_\beta$ of the system $(I,f_\beta)$. Notice that $g_\beta(\tilde{P}_\beta)=\tilde{P}_\beta$. Once $A_\beta$ enters $\tilde{P}_\beta$, $A_\beta$ will be restricted to $\tilde{P}_\beta$. Define $A_\beta^\ast:=A_\beta\cap \tilde{P}_\beta$. It is obviously seen that $A_\beta^\ast$ is a forward invariant closed set. Furthermore, one cancheck that there exists and only $A_\beta\cup\Lambda_\beta$ is attracted by $A_\beta^\ast$, i.e., $\mathbb{B}(A_\beta^\ast)=A_\beta\cup\Lambda_\beta$.

Next, we will show $A_\beta^\ast$ is a wild attractor of $g_\beta$. Since $A_\beta\cup\Lambda_\beta$ is nowhere dense in $I_\beta$, then there exists no open interval in $I_\beta$ attracted by $A_\beta^\ast$, which follows that $A_\beta^\ast$ is not a topological attractor. The basin of attraction $\mathbb{B}(A_\beta^\ast)$ of $A_\beta^\ast$ has Lebesgue measure equal to 1.
$g_\beta|A_\beta^\ast$ is topologically transitive since $f_\beta|P_\beta$ is topologically transitive, which follows that $A_\beta^\ast$ is not decomposable by  \cite{akin2012conceptions}, then there exists no forward invariant closed proper subset $A_0$ of $A_\beta^\ast$ such that $\mathbb{B}(A_0)$ admits positive Lebesgue measure. Therefore, $A_\beta^\ast$ is a metric attractor.

{\bf Case B:} $\beta\in(1,\sqrt{2}]\cap D$.

In the case $\beta\in(1,\sqrt{2}]\cap D$, $f_\beta$ can be renormalized finite times via period-doubling renormalization  \cite{brucks2004topics}. More precisely, if $\sqrt{2}< \beta^m \leq 2$ for some $m:=2^k, k\geq 1$, then $f_\beta$ is $k$ times renormalizable. Here we only consider the case $k=1$ since the case $k\geq 2$ is similar. If $k=1$, then there exists a restrictive interval $\tilde{R}_\beta$ such that $g_\beta^2\mid \tilde{R}_\beta$ is again a unimodal map. Denote the core of $g_\beta^2\mid \tilde{R}_\beta$ by $\tilde{P}_\beta^1$ . Similar to the proof of the case A, $A_\beta^\ast=A_\beta \cap \tilde{P}_\beta^1 \cup g_\beta (\tilde{P}_\beta^1)$ is a wild attractor on the dynamical system $(I_\beta,g_\beta)$. Similarly, if $k\geq 2$, then $A_\beta\cap (\cap_{i=0}^{2k-1} g_\beta^i(\tilde{P}_\beta^k))$ is a wild attractor of $g_\beta$, where $\tilde{P}_\beta^k$ is the core of $g_\beta^{2^k}\mid \tilde{R}_\beta^k$ with restrictive interval $\tilde{R}_\beta^k$.

{\bf Step 4:} Affine diffeomorphism.

Suppose $I_\beta=[a_\beta,b_\beta]$. Define an affine map $\varphi(x):[a_\beta,b_\beta]\rightarrow [0,1],x\mapsto \frac{x-a_\beta}{b_\beta-a_\beta}$ and  $\tilde{g}_\beta=\varphi\circ g_\beta \circ \varphi^{-1}$. Then $\tilde{g}_\beta$ is a $C^1$ map since $\varphi$ and $\varphi^{-1}$ are $C^1$. One can check that $|\tilde{g}'_\beta(x)|=\beta>1$ for each $x\in \varphi(A_\beta)$ since $\varphi$ is a affine diffeomorphism, then $\tilde{g}_\beta$ is a unit interval unimodal map admitting a hyperbolic repelling invariant set $\varphi(A_\beta)$ which contains a wild attractor since $\tilde{g}_\beta$ and $g_\beta$ are topologically conjugated. Notice that the length of inserted intervals differs by a finite number from $\sum\limits_{n=1}^{\infty}F_{\beta,n} a_{\beta,n}.$
In fact, we just changed a finite number in $\{a_{\beta,n}\}$ because we controled the length of the intervals inserted at the points of $orb(c)$ to be 1. By the Proposition \ref{summable}, the total length of inserted intervals is finite for each $\beta\in D$, which follows that the hyperbolic repelling invariant set of $\tilde{g}_\beta$ admits positive Lebesgue measure. Denote the hyperbolic repelling invariant Cantor set of $\tilde{g}_\beta$ by $\tilde{A}_\beta$ and the wild attractor of $\tilde{g}_\beta$ by $\tilde{A}_\beta^\ast$. One can check that $\mathbb{B}(\tilde{A}_\beta^\ast)=\tilde{A}_\beta\cup \tilde{\Lambda}_\beta$, where $\tilde{\Lambda}_\beta$ corresponds to the countable set $\Lambda_\beta$, i.e., $\tilde{\Lambda}_\beta=\varphi(\Lambda_\beta)$.

The proof of Theorem A is complete.
$\hfill\square$
\end{proof}

\section*{Acknowledgements}
\noindent

This work was supported by the National Key Research and Development Program of China (No. 2020YFA0714200).

\section*{References}
\setcounter{equation}{0}




\end{document}